\definecolor{chianti}{rgb}{0.6,0,0}
\definecolor{meretale}{rgb}{0,0,.6}
\definecolor{leaf}{rgb}{0,.35,0}
\DeclareMathOperator{\height}{ht}
\DeclareMathOperator{\Tr}{Tr}
\DeclareMathOperator{\fpt}{fpt}
\DeclareMathOperator{\lct}{lct}
\DeclareMathOperator{\ini}{in}
\DeclareMathOperator{\RI}{RI}
\DeclareMathOperator{\codim}{codim}
\newcommand{\fraka}{{\mathfrak{a}}}
\newcommand{\frakm}{{\mathfrak{m}}}
\newcommand{\frakp}{{\mathfrak{p}}}
\newcommand{\bigzero}{\mbox{\normalfont\Large\bfseries 0}}
\def\NN{\mathbb{N}}
\def\Pf{\operatorname{Pf}}
\newtheorem*{thm*}{Theorem}
\newtheorem{thm}{Theorem}[section]
\newtheorem{lem}[thm]{Lemma}
\newtheorem{cor}[thm]{Corollary}
\newtheorem{prop}[thm]{Proposition}
\theoremstyle{definition}
\newtheorem{defn}[thm]{Definition}
\declaretheorem[
  style=definition,
  title=Example,
  refname={example,examples},
  Refname={Example,Examples},
  sharenumber=thm,
]{exa}
\declaretheorem[
  style=definition,
  title=Remark,
  sharenumber=thm,
]{rmk}
\newtheorem*{question}{Question}
\begin{document}
\title[$F$-purity and FPT as linkage invariants]{$F$-purity and the $F$-pure threshold \\ as invariants of linkage}

\author{Vaibhav Pandey}
\address{Department of Mathematics, Purdue University, 150 N University St., West Lafayette, IN~47907, USA}
\email{pandey94@purdue.edu}

\dedicatory{Dedicated to Professor Bernd Ulrich on the occasion of his seventieth birthday}


\subjclass[2010]{Primary 13C40, 13A35, 14M06; Secondary 14M10, 14M12.}

\keywords{generic link, $F$-pure threshold, initial ideal, generic residual intersection, $F$-regular}

\begin{abstract}
The generic link of an unmixed radical ideal is a prime ideal. We show that the squarefreeness of the initial ideal and $F$-purity are, however, not preserved along generic links. On the flip side, for several important cases in liaison theory, including generic height three Gorenstein ideals and the maximal minors of a generic matrix, we show that the squarefreeness of the initial ideal, $F$-purity, and the $F$-pure threshold are each preserved along generic links by identifying a property of unmixed ideals which propagates along generic links. We use this property to establish the $F$-regularity of the generic links of such ideals. Finally, we study the $F$-pure threshold of the generic residual intersections of a complete intersection ideal and answer a related question of Kim--Miller--Niu.      
\end{abstract}

\maketitle
\tableofcontents
\addtocontents{toc}{\protect\setcounter{tocdepth}{1}}

\section{Introduction}
Given a proper, unmixed ideal $I$ in a polynomial ring $R$ over a field $K$, its geometric link is an ideal $J$ such that the ideal $I \cap J$ is generated by a regular sequence, that is, the scheme-theoretic union of the vanishing loci of $I$ and $J$ is a complete intersection. Clearly, the link of the ideal $I$ depends on the choice of a regular sequence in $I$. When this regular sequence is chosen in the `most general' manner, that is, as generic combinations of the generators of the ideal $I$, the link thus obtained, in a polynomial extension of $R$, is called the (first) \textit{generic link} $L_1(I)$ of $I$. Atleast when $R/I$ is a Cohen--Macaulay ring, the generic link specializes to any link of $I$; it is the prototypical link to study since most `good properties' are stable under deformations. Iterating this process, the $n$-th generic link of $I$ 
\[L_n(I)\colonequals L_1(L_{n-1}(I))\]
is the generic link of its $(n-1)$-th generic link, and analogously, models any ideal which is linked to $I$ in $n$ steps. 

Since the beginning of the modern study of linkage, or liaison theory, by Peskine and Szpiro \cite{PS74} and the foundational work by Huneke and Ulrich \cite{Huneke-Ulrich85, Huneke-Ulrich87, Huneke-Ulrich88}, the central research theme has been to understand when two given ideals can be linked in finitely many steps. This calls for studying how the algebro-geometric properties of an ideal change when one passes to its link; in particular, one wishes to understand which properties remain invariant along links. For example, height and the Cohen-Macaulay property are invariants of linkage, and the canonical module of the link is `dual' to that of $R/I$ \cite{PS74}. While it is easily seen that the link of an unmixed, radical ideal may not be radical, by \cite[Proposition 2.6]{Huneke-Ulrich85}, the generic link of an unmixed, radical ideal (even a generically complete intersection ideal) is indeed radical (in fact, prime). It is then natural to search for finer linkage invariants: 

\begin{question}
Let $I$ be an unmixed ideal in a polynomial ring $R$ over a field $K$ and $<$ a term order in $R$. Let $L_n(I)$ denote the $n$-th generic link of $I$ in a polynomial extension $S$ of $R$.
\begin{enumerate}
    \item If the initial ideal $\ini_<(I)$ is squarefree, is the initial ideal of the generic link $L_n(I)$ also squarefree for some term order in $S$ (and for each $n$)?
    \item Assume that $K$ has positive characteristic. If $R/I$ is an $F$-pure ring, is the generic link $S/L_n(I)$ also $F$-pure for each $n$?
\end{enumerate}
\end{question}

Note that if a homogeneous ideal has a squarefree initial ideal with respect to some term order or if it defines an $F$-pure ring, then it is forced to be radical.

We show that, in general, the above question has a \textit{negative} answer: the non-maximal (even sub-maximal) minors of a generic matrix (Example \ref{Example:main}), as well as the rational normal curve (more generally, the maximal minors of a Hankel matrix) (Example \ref{Example:main2}), are counterexamples. In fact, with these examples, we show that even if $R/I$ is $F$-regular, its first generic link may not even be $F$-injective. This observation greatly strengthens the fact that generic linkage does not preserve rational singularities \cite[Corollary 3.4]{Niu}.

The bulk of this paper deals with understanding under what hypothesis are the answers to the above question in the \textit{affirmative}. A major roadblock to our enquiry is that the generators of the generic link are known in very few cases. While a mapping cone construction exists (see \cite{PS74}) to find a free resolution of the link if we know a resolution of the ideal $I$, it is quite cumbersome to carry out in practice. The mapping cone construction gives the explicit generators of the generic link of well-behaved ideals whose resolutions are short, like a perfect ideal of height $2$, a Gorenstein ideal of height $3$ (\cite[\S 4.10]{Kustin-Ulrich-Memoirs}), and a complete intersection ideal (\cite[Example 3.4]{Huneke-Ulrich88}). 

To get around the issue of not knowing the explicit generators of the generic link, for an unmixed, homogeneous ideal $I$ in a polynomial ring $R$, we identify a property $\mathbf{P}$ of the pair $(R,I)$ (see Definition \ref{defn:main}) which propagates along generic links (see Lemma \ref{lemma:main}). Crucially, $\mathbf{P}$ only depends on the choice of a term order in $R$ and on the regular sequence defining the link, and \textit{not} on the generators of the link.

The simple observation that the property $\mathbf{P}$ propagates along generic links is quite powerful: We show that if the pair $(R,I)$ has the property $\mathbf{P}$, then the squarefreeness of the initial ideal, $F$-purity, and the $F$-pure threshold/log canonical threshold are each preserved along the $n$-th generic link of $I$ for each $n$ (Corollary \ref{cor:Main}). This point of view not only greatly extends known results on $F$-singularities to arbitrary generic links and residual intersections, but also provides short, conceptual proofs of known results on the $F$-pure threshold of several families of ideals, as we now highlight:

\subsection*{(1)}
We show that the generic link of a generic height $3$ Gorenstein ideal is strongly $F$-regular in prime characteristic (rational singularities in characteristic $0$) (see Theorem \ref{theorem:PfaffianFregular}). We also determine the $F$-pure threshold/log canonical threshold of the submaximal pfaffians of a generic alternating matrix and its $n$-th generic link (Corollary \ref{cor:FPTPfaffian}). We remark that the generators of the generic link of height three Gorenstein ideals are known (\cite[\S 4.10]{Kustin-Ulrich-Memoirs}, \cite[Theorem 4.7]{Kustin-Ulrich}); however, due to our approach of viewing the generic link via the property $\mathbf{P}$, we do \textit{not} need the generators of the generic link to establish its $F$-regularity (or its $F$-pure threshold, or the squarefreeness of its initial ideal). We also show the $n$-th universal link of a generic height $3$ Gorenstein ideal is $F$-rational in prime characteristic (rational singularities in characteristic $0$) for each $n$ (Corollary \ref{cor:Uni Link F rational}).
\subsection*{(2)}
In \cite[Theorem 6.3]{Pandey-Tarasova}, the authors show that the generic link of the maximal minors of a generic matrix is strongly $F$-regular (note that the generators of the generic link are not known). This result may now be viewed as an offshoot of the fact that the ideal of maximal minors of a generic matrix has the property $\mathbf{P}$ (Proposition \ref{prop:maximal minors}). Remarkably, this perspective allows us to simultaneously recover the $F$-pure threshold/log canonical threshold of the maximal minors of a generic matrix (\cite[Theorem 1.2]{Miller-Singh-Varbaro}, \cite[Theorem 5.6]{Docampo}) and that of the first generic link of the maximal minors of a generic matrix (\cite[Theorem 1]{Kim-Miller-Niu}) by a simpler method. More importantly, it enables us to extend these results to the $n$-th generic link for each $n$ (Remark \ref{remark:RecoverFPT}). 

We show that the ideal of non-maximal minors of a generic matrix typically does \textit{not} have the property $\mathbf{P}$, and due to this, the squarefreeness of the initial ideal and $F$-purity both fail to be transferred to its generic link (Example \ref{Example:main}).
\subsection*{(3)}
The notion of residual intersections, in its modern form, essentially goes back to Artin and Nagata \cite{Artin-Nagata} and has
broad applications in enumerative geometry, intersection theory, the study of Rees rings, and multiplicity theory (\cite{Fulton, Ulrich92, Huneke-Ulrich88}). Residual intersections are a vast generalization of linkage, where, under appropriate technical hypothesis, the regular sequence in consideration is replaced by an ideal of height $s$ with $s \geq \height(I)$ (see \S\ref{subsec: RI} for details). 

We calculate the $F$-pure threshold/log canonical threshold of the generic $s$-residual intersections of a complete intersection ideal having $\mathbf{P}$ (Corollary \ref{cor:FPTofRI}). Our key insight is that the property $\mathbf{P}$ is inherited by the generic $s$-residual intersections of a complete intersection ideal for \textit{each} $s\geq \height(I)$ (Theorem \ref{thm:RIhasP}). In particular, when $s=\height(I)$, we answer \cite[Question 4.5(b)]{Kim-Miller-Niu} of Kim--Miller--Niu by finding a class of complete intersection ideals---those posessing $\mathbf{P}$---for which the $F$-pure threshold is invariant under generic linkage. We note that if the complete intersection ideal does not have $\mathbf{P}$, then its $F$-pure threshold is not preserved under generic linkage (Remark \ref{remark:answerKMN}). 

\section{Background}\label{section:Background}
\subsection{Linkage}
While the notion of linkage holds more generally over Cohen--Macaulay rings (and most of the important results hold true over Gorenstein rings), for the purpose of this paper, and for simplicity of exposition, we restrict our attention to the situation where the ambient ring is polynomial (see the references in this subsection for the general statements).

\begin{defn}\label{definitionLinkage}
    Let $R$ be a polynomial ring, and let $I$ and $J$ be proper $R$-ideals. We say that $I$ and $J$ are \emph{linked} (or $R/I$ and $R/J$ are linked) if there exists an $R$-ideal $\mathfrak{a}$ generated by a regular sequence such that \[J = \mathfrak{a}:I \qquad \text{and} \qquad I = \mathfrak{a}:J,\] and use the notation $I \sim _{\mathfrak{a}} J$. Furthermore we say that the link is \emph{geometric} if we have $\height(I+J) \geq \height(I)+1$ (where $\height(-)$ denotes height). 
\end{defn}

It is clear that the ideal $\mathfrak{a}$ is contained in $I$ and $J$. Note that the associated primes of $I$ and $J$ have the same height, that is, the ideals $I$ and $J$ are unmixed. Further, the heights of the ideals $I$, $J$, and $\mathfrak{a}$ are equal. Moreover, when the link is geometric, the ideal $\mathfrak{a}$ is the intersection of $I$ and $J$.

The linked ideal is `dual' to the given ideal in a sense made precise by the following foundational result of linkage theory: 

\begin{prop}\label{propPS}\cite{PS74}
    Let $R$ be a polynomial ring, $I$ and $J$ be $R$-ideals, and $\mathfrak{a}$ is an ideal generated by a regular sequence such that $I = \mathfrak{a}:J$ and $J = \mathfrak{a}:I$. Suppose that $R/I$ is a Cohen--Macaulay ring. Then 
    \begin{enumerate}[\quad\rm(1)]
        \item $R/J$ is a Cohen--Macaulay ring.
        \item If $R$ is a local ring, $\omega_{R/I} \cong J/\mathfrak{a}$ and $\omega_{R/J} \cong I/\mathfrak{a}$, where $\omega_{R/I}$ (respectively $\omega_{R/J}$) denotes the canonical module of $R/I$ (respectively $R/J$).
        \item If the ideals $I$ and $J$ are geometrically linked, then $\height(I+J) = \height(I)+1$ and $R/(I+J)$ is a Gorenstein ring. 
    \end{enumerate}
\end{prop}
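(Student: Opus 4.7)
My plan is to reduce everything to Gorenstein duality over $\bar R := R/\mathfrak{a}$. Since $\mathfrak{a}$ is generated by a regular sequence in the polynomial ring $R$, the quotient $\bar R$ is a complete intersection, hence Gorenstein; and since $\height(I) = g = \height(\mathfrak{a})$ and $R/I$ is Cohen--Macaulay, the quotient $R/I = \bar R/\bar I$ (with $\bar I := I/\mathfrak{a}$ and, symmetrically, $\bar J := J/\mathfrak{a}$) is a maximal Cohen--Macaulay (MCM) $\bar R$-module. The hypotheses translate into the duality relations $\bar I = \ann_{\bar R}(\bar J)$ and $\bar J = \ann_{\bar R}(\bar I) = \Hom_{\bar R}(\bar R/\bar I,\bar R)$, and all three assertions will be read off from these. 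I will freely localize to a closed point whenever part (2) requires it.

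For part (2), using that $\bar R$ is Gorenstein (so $\omega_{\bar R} \cong \bar R$ locally) and that for an MCM module $M$ over a Gorenstein local ring one has $\omega_M \cong \Hom_{\bar R}(M,\omega_{\bar R})$, the chain
\[
\omega_{R/I} \cong \Hom_{\bar R}(\bar R/\bar I, \omega_{\bar R}) \cong \Hom_{\bar R}(\bar R/\bar I, \bar R) = \ann_{\bar R}(\bar I) = \bar J = J/\mathfrak a
\]
gives the first isomorphism; the second is symmetric once (1) supplies the Cohen--Macaulayness of $R/J$. For part (1), I would apply $\Hom_{\bar R}(-,\bar R)$ to the short exact sequence $0 \to \bar I \to \bar R \to \bar R/\bar I \to 0$. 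Since $\bar R/\bar I$ is MCM over the Gorenstein ring $\bar R$, all $\Ext^i_{\bar R}(\bar R/\bar I,\bar R)$ vanish for $i > 0$, so the sequence collapses to
\[
0 \to \bar J \to \bar R \to \Hom_{\bar R}(\bar I,\bar R) \to 0,
\]
identifying $\bar R/\bar J \cong \Hom_{\bar R}(\bar I,\bar R)$. A depth chase on the original sequence shows that $\bar I$ is itself MCM, hence so is its $\bar R$-dual, and therefore $R/J$ is Cohen--Macaulay.

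For part (3), the geometric hypothesis guarantees that $I+J$ contains a nonzerodivisor on $R/\mathfrak{a}$, whence $I \cap J = (\mathfrak{a}:I) \cap (\mathfrak{a}:J) = \mathfrak{a}:(I+J) = \mathfrak{a}$. The resulting Mayer--Vietoris sequence
\[
0 \to R/\mathfrak{a} \to R/I \oplus R/J \to R/(I+J) \to 0
\]
has outer terms Cohen--Macaulay of dimension $\dim R - g$, so the depth lemma yields $\depth R/(I+J) \geq \dim R - g - 1$, matching the upper bound on dimension implied by the geometric height hypothesis; hence $\height(I+J) = g+1$ and $R/(I+J)$ is Cohen--Macaulay. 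To obtain Gorensteinness, I would apply $\Ext^*_R(-,R)$ to this same sequence: since each of the four modules is Cohen--Macaulay and $R$ is regular, every $\Ext^*_R(-,R)$ is concentrated in a single degree, and the long exact sequence collapses to
\[
0 \to \omega_{R/I} \oplus \omega_{R/J} \to \omega_{R/\mathfrak{a}} \to \omega_{R/(I+J)} \to 0,
\]
which under the identifications of (2) becomes $0 \to \bar J \oplus \bar I \to \bar R \to \omega_{R/(I+J)} \to 0$ with the leftmost map being the natural sum of inclusions, giving $\omega_{R/(I+J)} \cong \bar R/(\bar I + \bar J) = R/(I+J)$. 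The main obstacle I anticipate is exactly the last step: one must verify that the connecting map in this $\Ext$ sequence is indeed the sum $\bar J \oplus \bar I \to \bar R$ under the identifications of part (2), so that the cokernel matches $R/(I+J)$ on the nose rather than merely up to some ambiguity; the remainder is routine bookkeeping of Gorenstein duality over $\bar R$.
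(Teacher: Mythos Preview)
The paper does not supply its own proof of this proposition; it is stated with a citation to Peskine--Szpiro and used as background. Your argument is essentially the classical Peskine--Szpiro proof: pass to the Gorenstein complete intersection $\bar R = R/\mathfrak{a}$, identify canonical modules with annihilators via $\Hom_{\bar R}(-,\bar R)$, and read off all three claims. It is correct.

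On the obstacle you flag in part~(3): it resolves cleanly. The map $\omega_{R/I} \to \omega_{R/\mathfrak{a}}$ induced on $\Ext^g_R(-,R)$ by the surjection $R/\mathfrak{a} \twoheadrightarrow R/I$ becomes, after the change-of-rings isomorphism $\Ext^g_R(R/I,R) \cong \Hom_{\bar R}(\bar R/\bar I,\bar R)$, precomposition with $\bar R \twoheadrightarrow \bar R/\bar I$. Under the identification of $\Hom_{\bar R}(\bar R/\bar I,\bar R)$ with $\bar J$ (sending $\bar j$ to multiplication by $\bar j$), this precomposition is exactly the inclusion $\bar J \hookrightarrow \bar R$. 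Hence the map $\bar J \oplus \bar I \to \bar R$ is $(\bar j,\bar i) \mapsto \bar j \pm \bar i$ (the sign coming from the Mayer--Vietoris convention), its image is $\bar I + \bar J$, and the cokernel is $R/(I+J)$ on the nose. So your worry is unfounded and the proof goes through.
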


\begin{defn}
    Let $R$ be a polynomial ring and $I$ an unmixed $R$-ideal of height $g>0$. Let $\mathbf{f}\colonequals f_1,\ldots,f_n$ be a generating set of $I$. Let $Y$ be a $g \times n$ matrix of indeterminates, and let $\mathfrak{a}$ be the ideal generated by the entries of the matrix $Y[f_1\dots f_n]^T$ (where $[\quad]^T$ denotes the transpose of the matrix). The ideal \[L_1(I)=L_1(\mathbf{f}) \colonequals \mathfrak{a}R[Y]:IR[Y]\] is a \emph{generic link} of $I$.
\end{defn}

We point out that, in the above definition, the entries of the matrix $Y[f_1\dots f_n]^T$ form an $R[Y]$-regular sequence due to \cite{Hochster73}. A generic link of $I$ is indeed a (geometric) link of $I$:

\begin{prop}\label{prop:PS}\cite{PS74}
    Suppose that $R$ is a polynomial ring and that $I$ is an unmixed ideal of $R$ of height $g$. Let $\mathfrak{a}$ be an $R$-ideal generated by a length $g$ regular sequence which is properly contained in the ideal $I$, and let $J = \mathfrak{a}:I$, then $I \sim _{\mathfrak{a}} J$. 
\end{prop}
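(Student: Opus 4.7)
The hypothesis $J=\mathfrak{a}:I$ supplies half of what is required for $I\sim_{\mathfrak{a}}J$, so the content of the proposition is the identity $I=\mathfrak{a}:(\mathfrak{a}:I)$. One inclusion, $I\subseteq\mathfrak{a}:J$, is immediate from $JI\subseteq\mathfrak{a}$. My plan for the reverse inclusion is to localize at each associated prime of $I$, where $\mathfrak{a}$ cuts out an artinian Gorenstein local ring, and then invoke the double annihilator identity available in such rings.

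Because $I$ is unmixed of height $g$, every $\mathfrak{p}\in\Ass(R/I)$ is a minimal prime of $I$ of height precisely $g$. Hence $I$ has no embedded components and can be recovered from its localizations as $I=\bigcap_{\mathfrak{p}\in\Ass(R/I)}\bigl(IR_{\mathfrak{p}}\cap R\bigr)$, so it suffices to prove $(\mathfrak{a}:J)R_{\mathfrak{p}}\subseteq IR_{\mathfrak{p}}$ for each such associated prime $\mathfrak{p}$.

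Fix such a $\mathfrak{p}$. Then $R_{\mathfrak{p}}$ is a regular local ring of dimension $g$, and $\mathfrak{a}R_{\mathfrak{p}}$ is generated by a regular sequence of length $g$, so $\bar R\colonequals R_{\mathfrak{p}}/\mathfrak{a}R_{\mathfrak{p}}$ is an artinian Gorenstein local ring. Since colons commute with localization for finitely generated ideals,
\[
(\mathfrak{a}:J)R_{\mathfrak{p}}=\mathfrak{a}R_{\mathfrak{p}}:JR_{\mathfrak{p}}=\mathfrak{a}R_{\mathfrak{p}}:\bigl(\mathfrak{a}R_{\mathfrak{p}}:IR_{\mathfrak{p}}\bigr),
\]
which, after reducing modulo $\mathfrak{a}R_{\mathfrak{p}}$, is $\ann_{\bar R}\ann_{\bar R}(I\bar R)$. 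The one nontrivial ingredient---and the main obstacle to making the argument rigorous---is the double annihilator identity $\ann_{\bar R}\ann_{\bar R}(\bar K)=\bar K$ for every ideal $\bar K\subseteq\bar R$. This is specifically a Gorenstein phenomenon, flowing from the self-injectivity of $\bar R$ (equivalently, from Matlis duality in the artinian local case), and is the reason classical linkage works as smoothly as it does. Applying the identity with $\bar K=I\bar R$ yields $(\mathfrak{a}:J)R_{\mathfrak{p}}=IR_{\mathfrak{p}}$. Taking intersections over $\mathfrak{p}\in\Ass(R/I)$ gives $\mathfrak{a}:J\subseteq I$ and completes the proof.
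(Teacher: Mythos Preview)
The paper does not supply its own proof of this proposition; it is stated with a citation to Peskine--Szpiro \cite{PS74} and used as background. Your argument is correct and is essentially the classical one: the inclusion $I\subseteq\mathfrak{a}:J$ is formal, and for the reverse you reduce to the associated primes of the unmixed ideal $I$, where $R_{\mathfrak{p}}/\mathfrak{a}R_{\mathfrak{p}}$ is artinian Gorenstein and the double annihilator identity $\ann\ann(\bar K)=\bar K$ gives $(\mathfrak{a}:J)R_{\mathfrak{p}}=IR_{\mathfrak{p}}$; intersecting over these primes recovers $I$. The only step worth making slightly more explicit is why the generators of $\mathfrak{a}$ remain a regular sequence in $R_{\mathfrak{p}}$: since $\height(\mathfrak{a})=\height(\mathfrak{p})=g$, the prime $\mathfrak{p}$ is minimal over $\mathfrak{a}$, so $\mathfrak{a}R_{\mathfrak{p}}$ is $\mathfrak{p}R_{\mathfrak{p}}$-primary and its $g$ generators form a system of parameters, hence a regular sequence in the regular (in particular Cohen--Macaulay) local ring $R_{\mathfrak{p}}$.
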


While the definition of generic link depends on a choice of a generating set of the ideal, it turns out that any two generic links are equivalent:

\begin{defn}
    Let $(R,I)$ and $(S,J)$ be pairs where $R$ and $S$ are Noetherian rings and $I\subseteq R$, $J \subseteq S$ are ideals. We say $(R,I)$ and $(S,J)$ are \emph{equivalent}, and write $(R,I) \equiv (S,J)$, if there exist finite sets of variables, $X$ over $R$ and $Z$ over $S$, and an isomorphism $\varphi:R[X] \rightarrow S[Z]$ such that $\varphi(IR[X]) = JS[Z]$. 
\end{defn}

\begin{lem}\cite[Proposition 2.4]{Huneke-Ulrich85}\label{lemma:LinksEquiv}
    Let $I$ be an unmixed ideal in a polynomial ring $R$. If $J_1 \subseteq R[X_1]$ and $J_2 \subseteq R[X_2]$ are generic links of $I$, then we have $(R[X_1], J_1) \equiv (R[X_2], J_2)$. 
\end{lem}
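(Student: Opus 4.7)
Writing $J_1=L_1(\mathbf{f})$ and $J_2=L_1(\mathbf{g})$ for the generating sets $\mathbf{f}=f_1,\ldots,f_n$ and $\mathbf{g}=g_1,\ldots,g_m$ of $I$ used to form them, the plan is to reduce the general case to a single elementary move: showing that the generic link built from $\mathbf{f}$ is equivalent to the one built from the enlarged set $(\mathbf{f},f_{n+1})$ obtained by appending an arbitrary element $f_{n+1}\in I$. Once this elementary equivalence is in hand, I can pass from $\mathbf{f}$ to $\mathbf{g}$ via the common enlargement $(\mathbf{f},\mathbf{g})=f_1,\ldots,f_n,g_1,\ldots,g_m$ by iterating the move, and using that reordering generators (which merely permutes columns of the indeterminate matrix) preserves the equivalence class of the pair.

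For the elementary move, set $h=\height(I)$, write $f_{n+1}=\sum_{j=1}^n a_j f_j$ with $a_j\in R$, and let $Y=(y_{ij})$ and $Y'=(Y\mid \mathbf{z})$ be matrices of indeterminates over $R$ of sizes $h\times n$ and $h\times(n+1)$, respectively, where $\mathbf{z}=(z_1,\ldots,z_h)^T$ is a fresh column. The generators of the ideal $\mathfrak{a}'=(Y'[f_1,\ldots,f_{n+1}]^T)$ defining $L_1(\mathbf{f},f_{n+1})$ are
\[\sum_{j=1}^n y_{ij}f_j + z_i f_{n+1} = \sum_{j=1}^n (y_{ij}+z_i a_j)f_j, \qquad i=1,\ldots,h.\]
The substitution $y_{ij}\mapsto y'_{ij}\colonequals y_{ij}+z_i a_j$ (leaving $\mathbf{z}$ fixed) is a unipotent, hence invertible, $R$-linear change of coordinates on $R[Y']$, so $R[Y']=R[Y''][\mathbf{z}]$ where $Y''=(y'_{ij})$ is another $h\times n$ matrix of indeterminates over $R$; in these new coordinates $\mathfrak{a}'=\mathfrak{a}_{Y''}\cdot R[Y']$, where $\mathfrak{a}_{Y''}\subseteq R[Y'']$ is the analog of $\mathfrak{a}$ built from $Y''$ in place of $Y$.

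Since the extension $R[Y'']\hookrightarrow R[Y''][\mathbf{z}]=R[Y']$ is free, hence faithfully flat, and $IR[Y'']$ is finitely generated, colon commutes with this extension:
\[L_1(\mathbf{f},f_{n+1}) = \mathfrak{a}'R[Y']:IR[Y'] = \bigl(\mathfrak{a}_{Y''}R[Y'']:IR[Y'']\bigr)R[Y'] = L_1(\mathbf{f})_{Y''}\cdot R[Y'].\]
Combined with the tautological equivalence $(R[Y],L_1(\mathbf{f}))\equiv (R[Y''],L_1(\mathbf{f})_{Y''})$ obtained by relabeling $y_{ij}\leftrightarrow y'_{ij}$, this yields $(R[Y],L_1(\mathbf{f}))\equiv (R[Y'],L_1(\mathbf{f},f_{n+1}))$, as required. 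The only spot that requires genuine attention is the unipotent change of variables that converts $\mathfrak{a}'$ into the extension of $\mathfrak{a}_{Y''}$ from a smaller polynomial ring; once that is in place, the rest is formal transitivity of $\equiv$ together with the standard commutation of colon with flat base change.
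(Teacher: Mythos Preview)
The paper does not supply its own proof of this lemma; it is quoted verbatim from \cite[Proposition~2.4]{Huneke-Ulrich85} and used as a black box. So there is no ``paper's proof'' to compare against, and your write-up is in fact a self-contained argument for a result the paper only cites.

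That said, your argument is correct and is essentially the standard one. The reduction to the elementary move of appending a single redundant generator, followed by the unipotent change of coordinates $y'_{ij}=y_{ij}+z_i a_j$ that exhibits the larger defining ideal $\mathfrak{a}'$ as the extension of $\mathfrak{a}_{Y''}$ from $R[Y'']$ to $R[Y''][\mathbf{z}]$, is exactly how Huneke and Ulrich prove the proposition. The two points you flag---that the coordinate change is invertible (so the $y'_{ij},z_i$ remain algebraically independent over $R$) and that colon commutes with the faithfully flat polynomial extension $R[Y'']\hookrightarrow R[Y''][\mathbf{z}]$ because $I$ is finitely generated---are the only places where anything substantive happens, and both are handled cleanly. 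The transitivity step through the common enlargement $(\mathbf{f},\mathbf{g})$, together with the observation that reordering generators corresponds to a column permutation of the indeterminate matrix (an $R$-automorphism of $R[Y]$), completes the proof without issue.
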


Due to the above lemma, we freely use the phrase ``the generic link'' of an ideal in this paper.

\begin{defn} \label{def:Gen/Uni link}
Let $R$ be a polynomial ring and $I$ be an unmixed $R$-ideal of height $g>0$. Let $\mathbf{f}$ be a generating set of $I$. The \textit{$n$-th generic link }of $I$ is the ideal \[L_n(I)=L_1(L_{n-1}(I)) \quad \text{for} \; n>0 \quad \text{with} \quad L_0(I) = I \quad \text{and} \quad L_1(I)=L_1(\mathbf{f}).\]

Assume that $R$ is a regular local ring. The \textit{first universal link $L^1(I)$} of $I$ is obtained similarly as the generic link with respect to invertible indeterminates and the \textit{$n$-th universal link} is obtained iteratively:
\[L^n(I)=L^1(L^{n-1}(I)) \quad \text{for} \; n>0 \quad \text{with} \quad L^0(I) = I \quad \text{and} \quad L^1(I)=L^1(\mathbf{f}).\]
\end{defn}

The \textit{linkage class} of an ideal $I$ in a polynomial ring $R$ is the set of all $R$-ideals which can be obtained from $I$ by a finite number of links.

The point of defining the $n$-th generic link of $I$ (respectively the $n$-th universal link of $I$) is that the 
$n$-th generic link (respectively the $n$-th universal link) is a deformation (respectively, \textit{essentially a deformation}) \footnote{We recall that a ring $A$ is essentially a deformation of another ring $B$ if it is obtained by a finite sequence of deformations and localizations at prime ideals beginning from $B$.} of any $R$-ideal linked to $I$ in $n$ steps (\cite[Proposition 2.14, Theorem 2.17]{Huneke-Ulrich87}) and therefore controls the algebro-geometric properties of any link of $I$ upto deformation. In fact, under mild assumptions, one can descend from a sequence of universal links to a sequence of links in the original ring and still preserve most of the good properties of universal linkage (\cite[Lemma 2.4]{HunekeUlrich-Duke}).
   
\subsection{Frobenius splittings and the $F$-pure threshold}

For a reduced Noetherian ring $R$ of prime characteristic $p>0$, the ($e$-fold) Frobenius endomorphism on $R$ is the map $F^e: R \to R$ with $F^e(r)=r^{p^e}$. To avoid any confusion between the domain and codomain, we instead use the notation $F^e_*(R)$ for the codomain and $F^e_*(r)$ for its elements. So $F^e_*(R)$ is the same ring as $R$ with the $R$-module structure obtained from the Frobenius map:
\[r.F^e_*(s)\colonequals F^e_*(r^{p^e}s).\]

The ring $R$ is \textit{$F$-finite} if $F^e_*(R)$ is a finite $R$-module for some (equivalently, every) $e>0$. A finitely generated algebra $R$ over a field $K$ is $F$-finite if and only if $F_*(K)$ is a finite field extension of $K$---a fairly mild condition. The $F$-finite ring $R$ is \textit{$F$-pure} if the Frobenius endomorphism $F: R \to F_*(R)$ with $F(r)=F_*(r^p)$ is pure, that is, for any $R$-module $M$, the map $F\otimes 1: R\otimes_R M \to F_*(R)\otimes_R M$ is injective. Note that if $R$ is an $F$-pure ring, the Frobenius map $F:R \to F^e_*(R)$ splits as a map of $R$-modules so that the $R$-module $F_*(R)$ admits a free $R$-summand. An $F$-finite ring $R$ is \textit{strongly $F$-regular} if it has a sufficiently large number of Frobenius splittings, made precise as follows: For every element $c \in R^{\circ}$, there exists an integer $q = p^e$ such that the $R$-linear map $R \longrightarrow F^e_*(R)$ sending $1$ to $F^e_*(c)$ splits as a map of $R$-modules. Clearly, a strongly $F$-regular ring is $F$-pure. 

The rings under consideration in this paper are $\NN$-graded; since the various competing notions of $F$-regularity (like weakly $F$-regular, $F$-regular, and strongly $F$-regular) coincide in the graded case due to \cite[Corollary 4.3]{Lyubeznik-Smith}, we make no distinction between them throughout the paper.   

The following result will help us in showing that several ideals considered in this paper define $F$-pure rings.

\begin{lem}\label{corollary:PT}\cite[Lemma 3.1, Corollary 3.3]{Pandey-Tarasova}
Let $R=K[x_1,\ldots,x_n]$ be a polynomial ring over the field $K$ of
characteristic $p>0$ and let $I$ be an unmixed homogeneous $R$-ideal. If $\fraka \subsetneq I$ is an ideal generated by a regular
sequence of length equal to the height of $I$ and $J = \fraka : I$, then
\[\fraka^{[p]}:\fraka \subseteq (I^{[p]}:I) \cap (J^{[p]}:J).\]
In particular, if the ring $S/\fraka$ is $F$-pure, then so are $S/I$ and $S/J$.
\end{lem}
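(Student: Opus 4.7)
The plan is to prove the colon containment first, and then derive $F$-purity from Fedder's criterion. The key observation I want to exploit is that, even after Frobenius on the second factor, we still have
\[I \cdot J^{[p]} \subseteq \fraka,\]
because for $f \in I$ and a generator $g$ of $J$ one has $f g^p = g^{p-1}(fg) \in g^{p-1}\fraka \subseteq \fraka$, using $fg \in IJ \subseteq \fraka$ (which comes from $J = \fraka:I$). Symmetrically $J \cdot I^{[p]} \subseteq \fraka$, using the other direction of linkage $I = \fraka:J$ (Proposition~\ref{prop:PS}, available because $I$ is unmixed and $\fraka$ is a regular sequence of length $\height(I)$ strictly contained in $I$).

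Next, I would invoke the Frobenius--linkage identity available on the regular ring $R$: by Kunz's theorem the Frobenius endomorphism on $R$ is flat, so colons commute with bracket powers, $(K:L)^{[p]} = K^{[p]}:L^{[p]}$ for all ideals $K, L$ of $R$. Applied to $I = \fraka:J$ this gives
\[I^{[p]} \;=\; \fraka^{[p]} : J^{[p]},\]
and symmetrically $J^{[p]} = \fraka^{[p]} : I^{[p]}$. With these identities in hand, the main inclusion is a one-line colon manipulation: for any $u \in \fraka^{[p]}:\fraka$,
\[u \cdot I \cdot J^{[p]} \;\subseteq\; u\cdot\fraka \;\subseteq\; \fraka^{[p]},\]
so $uI \subseteq \fraka^{[p]}:J^{[p]} = I^{[p]}$, that is, $u \in I^{[p]}:I$. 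Interchanging the roles of $I$ and $J$ gives $u \in J^{[p]}:J$ in the same way.

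For the ``in particular'' part, since $\fraka$, $I$, and $J$ are all homogeneous, I would apply Fedder's criterion at the homogeneous maximal ideal $\frakm$ of $R$: the ring $R/\fraka$ is $F$-pure exactly when $\fraka^{[p]}:\fraka \not\subseteq \frakm^{[p]}$, and the displayed containment propagates this non-containment to $I^{[p]}:I$ and to $J^{[p]}:J$, whence $R/I$ and $R/J$ are $F$-pure by a second application of Fedder. The step requiring the most care is the Frobenius--linkage identity $I^{[p]} = \fraka^{[p]}:J^{[p]}$, which I would verify either directly from Kunz flatness by writing the colon as the kernel of a $\Hom$ map on a finitely presented module and using that the Frobenius functor is exact on a regular ring, or by hand using the classical formula $\fraka^{[p]}:\fraka = \fraka^{[p]} + \bigl((f_1\cdots f_g)^{p-1}\bigr)$ for a regular sequence $f_1, \ldots, f_g$ generating $\fraka$; beyond this point the argument is a routine chase of colons.
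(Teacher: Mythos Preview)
Your argument is correct. The paper does not supply its own proof of this lemma; it simply cites \cite[Lemma 3.1, Corollary 3.3]{Pandey-Tarasova}, so there is nothing in the present paper to compare against line by line. That said, your approach is the standard one and matches what one finds in the cited source: the containment $\fraka^{[p]}:\fraka \subseteq I^{[p]}:I$ follows from the identity $(\fraka:J)^{[p]} = \fraka^{[p]}:J^{[p]}$ (a consequence of the flatness of Frobenius on the regular ring $R$) together with $IJ \subseteq \fraka$, exactly as you wrote, and then Fedder's criterion handles the $F$-purity consequence.

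Two minor remarks. First, your verification that $I\cdot J^{[p]} \subseteq \fraka$ via $fg^{p} = g^{p-1}(fg)$ is fine but heavier than needed: since $J^{[p]} \subseteq J$, one has $I\cdot J^{[p]} \subseteq IJ \subseteq \fraka$ immediately. Second, the ``$S$'' in the statement is a typo in the paper for $R$; you have interpreted it correctly.
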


\begin{defn}
Let $R$ be a polynomial ring over a field of characteristic $p>0$ and let $\frakm$ denote its homogeneous maximal ideal. For a homogeneous proper ideal $I$ and integer $e>0$, set
\[ \nu_I(p^e) = \max\{r \in \NN \; | \; I^r \nsubseteq \frakm ^{[p^e]}\}, \]
where $\frakm ^{[p^e]}=(a^{p^e} \;|\;a \in \frakm)$. If $I$ is generated by $N$ elements, it is readily seen that
\[0 \leq \nu_I(p^e) \leq N(p^e-1). \]
Moreover, if $f \in I^r\setminus \frakm ^{[p^e]}$, then $f^p \in I^{pr}\setminus \frakm ^{[p^{e+1}]}$. Thus,
\[\nu_I(p^{e+1}) \geq p\nu_I(p^e).\]
It follows that the sequence of real numbers $\left\{\nu_I(p^e)/p^e \right\}_{e>0}$ is non-decreasing and bounded above; its limit is the \textit{$F$-pure threshold} of $I$, denoted $\fpt(I)$.
\end{defn}

The notion of $F$-pure thresholds is due to Takagi and Watanabe \cite{Takagi-Watanabe}. We point out that the $F$-pure threshold may be defined in a more general setup (see \cite{Mircea-Takagi-Watanabe, Destefani-Luis, Destefani-LuisNagoya}), however the above definition is adequate for this paper. The $F$-pure threshold is the positive characteristic analog of the \textit{log canonical threshold}: a numerical measure of singularity in birational geometry. For simplicity of exposition, let $I$ be a homogeneous ideal in a polynomial ring over the rational numbers. Using ``$I$ modulo $p$" to denote the characteristic $p$ model, one has the inequality
\[\fpt(I \, \text{ modulo } \, p) \leq \lct(I) \quad \text{for all} \; p\gg 0, \]
where $\lct(I)$ denotes the log canonical threshold of $I$. Moreover,
\[ \lim _{p \to \infty} \fpt(I \, \text{ modulo } \, p) = \lct(I).\]
These facts follow from the work of Hara and Yoshida \cite{Hara-Yoshida}; see \cite[Theorems 3.3, 3.4]{Mircea-Takagi-Watanabe}. The following result is likely well-known to the experts:

\begin{lem} \label{lemma:fpt<height}
The $F$-pure threshold of a homogeneous ideal $I$ in a polynomial ring $R$ is bounded above by its height. Furthermore, if the homogeneous ideal is unmixed and radical, then 
\[\fpt(I)=\height(I) \]
implies that the ring $R/I$ is $F$-pure. 
\end{lem}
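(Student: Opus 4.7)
The plan is to analyze the integer $\nu_I(p^e)$ in two stages.

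For the bound $\fpt(I) \le \height(I) =: h$, I would reduce to the case of an ideal primary to the homogeneous maximal ideal in a polynomial ring of dimension $h$. Let $\frakp$ be a minimal prime of $I$ with $\height(\frakp) = h$. Since $I \subseteq \frakp$, we have $I^r \subseteq \frakp^r$, and hence $\nu_I(p^e) \le \nu_\frakp(p^e)$ and $\fpt(I) \le \fpt(\frakp)$, reducing to the case of a prime. After harmlessly enlarging $K$ to an infinite field (the $F$-pure threshold is stable under faithfully flat base change), I would pick $n-h$ linear forms $\ell_1, \ldots, \ell_{n-h}$ that are $K$-linearly independent in $R_1$ and whose images in $R/\frakp$ form a homogeneous system of parameters. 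A Bertini-type comparison for hyperplane sections then relates $\fpt(\frakp)$ to the $F$-pure threshold of the image $\frakp'$ of $\frakp$ in the polynomial ring $R' = R/(\ell_1, \ldots, \ell_{n-h})$ of dimension $h$, in which $\frakp'$ is primary to the homogeneous maximal ideal $\frakm'$. For any such $\frakm'$-primary ideal $J$, pigeonhole on monomial exponents in $h$ variables forces $J^r \subseteq (\frakm')^r \subseteq (\frakm')^{[p^e]}$ for every $r \ge h(p^e - 1) + 1$, and hence $\fpt(J) \le h$. The main technical point is the Bertini-type hyperplane-section step; the pigeonhole argument is routine.

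For the second assertion, I assume that $I$ is radical and unmixed of height $h$ with $\fpt(I) = h$. The plan is to apply Lemma~\ref{corollary:PT}: it suffices to exhibit a regular sequence $f_1, \ldots, f_h \in I$ such that the complete intersection $\fraka = (f_1, \ldots, f_h)$ has $R/\fraka$ $F$-pure, since that lemma then forces $R/I$ to be $F$-pure as well. By Fedder's criterion for complete intersections, this is equivalent to the non-containment $f_1^{p^e - 1} \cdots f_h^{p^e - 1} \notin \frakm^{[p^e]}$ (after possibly replacing $p$ by $p^e$ via the standard iterated Fedder criterion). The hypothesis $\fpt(I) = h$, together with the sharp upper bound $\nu_I(p^e) \le h(p^e - 1)$ from the first part, produces for some $e \ge 1$ an element $g \in I^{h(p^e - 1)} \setminus \frakm^{[p^e]}$. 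The crux is to promote this $g$ to a structured product $f_1^{p^e - 1} \cdots f_h^{p^e - 1}$ coming from a regular sequence $f_1, \ldots, f_h \in I$; here the radical and unmixed hypotheses on $I$ enter essentially, ensuring that the minimal primes of $I$ are all of height $h$ and allowing one to arrange the required structure on $g$ via prime avoidance among the associated primes. The hardest step is this structured extraction, which is the sole place the radical, unmixed hypothesis is used beyond invoking Lemma~\ref{corollary:PT}.
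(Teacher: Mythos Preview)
Your approach to the first assertion differs from the paper's and has an unjustified step. The paper localizes at a minimal prime $\frakp$ of $I$ with $\height(\frakp)=h$: in the regular local ring $R_\frakp$ of dimension $h$, pigeonhole gives $\frakp_\frakp^{h(p^e-1)+1}\subseteq \frakp_\frakp^{[p^e]}$, and contracting back uses that $\frakp^{[p^e]}$ is $\frakp$-primary (flatness of Frobenius on the regular ring $R$) to obtain $\frakp^{(h(p^e-1)+1)}\subseteq \frakp^{[p^e]}\subseteq \frakm^{[p^e]}$. Your hyperplane-section reduction is more delicate than you indicate: for any quotient $R\twoheadrightarrow R'=R/(\ell_1,\dots,\ell_{n-h})$ one has, directly from the definitions, $\nu_{IR'}(p^e)\le \nu_I(p^e)$ and hence $\fpt_{R'}(IR')\le \fpt_R(I)$. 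Thus the naive comparison gives $\fpt(\frakp')\le \fpt(\frakp)$, which is the \emph{wrong} direction for your argument; to reverse it you would need an inversion-of-adjunction type statement for $F$-pure thresholds, which you neither state nor cite.

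The second assertion has a genuine gap. The paper does not attempt a direct argument here; it invokes \cite[Theorem~3.11]{Takagi}: the equality $\fpt(I)=\height(I)$ means the pair $(R,I^{\height(I)})$ is $F$-pure, and Takagi's theorem then yields that $R/I$ is $F$-pure. Your plan tries to bypass this by producing a regular sequence $f_1,\dots,f_h\in I$ with $(f_1\cdots f_h)^{p^e-1}\notin\frakm^{[p^e]}$, but two steps are missing. First, $\fpt(I)=h$ together with $\nu_I(p^e)\le h(p^e-1)$ does \emph{not} force $\nu_I(p^e)=h(p^e-1)$ for some $e$: writing $\nu_I(p^e)=h(p^e-1)-k_e$ with $k_e\ge 0$, the limit condition only gives $k_e/p^e\to 0$, so all $k_e$ could be positive. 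Second, and more seriously, even granting an element $g\in I^{h(p^e-1)}\setminus\frakm^{[p^e]}$, there is no mechanism offered to ``promote'' $g$ to a structured product $(f_1\cdots f_h)^{p^e-1}$ with $f_1,\dots,f_h$ a regular sequence. A witness $g$ is a sum of monomials in generators of $I$, and a single surviving monomial $f_{i_1}\cdots f_{i_{h(p^e-1)}}\notin\frakm^{[p^e]}$ need not factor as an $(p^e-1)$-th power of a product of $h$ elements, let alone of a regular sequence; the radical/unmixed hypothesis and prime avoidance do not produce this structure. This ``crux'' is precisely the content of Takagi's theorem that the paper cites, and your outline does not supply an argument for it.
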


\begin{proof}
Let $\frakp$ be a prime ideal in the polynomial ring $R=K[\underline{x}]$, where the field $K$ has characteristic $p>0$ and $\frakm$ is the homogeneous maximal ideal of $R$. Then $R_{\frakp}$ is a regular local ring of dimension $\height(\frakp)$. For each $e>0$, the pigeonhole principle gives
\[\frakp _{\frakp}^{\height{\frakp}(p^e-1)+1} \subseteq \frakp _{\frakp}^{[p^e]}.\]
Contracting back to $R$, by the flatness of the Frobenius map, we get
\[\frakp ^{(\height{p}(p^e-1)+1)} \subseteq \frakp ^{[p^e]}.\]
Therefore
\[\frakp ^r \subseteq \frakp ^{(r)} \subseteq \frakp ^{[p^e]} \subseteq \frakm ^{[p^e]} \quad \text{for} \quad r>\height{\frakp}(p^e-1). \]
In particular, let $\frakp$ be a minimal prime of the ideal $I$ of the same height. We get
\[ \fpt(I) \leq \fpt(\frakp) \leq \lim_{e \to \infty} \frac{\height(I)(p^e-1)}{p^e}=\height(I). \]
The last assertion follows from \cite[Theorem 3.11]{Takagi} since the hypothesis
\[\fpt(I)=\height(I) \]
implies that the pair $(R,I^{\height(I)})$ is $F$-pure (see \cite[\S 3]{Takagi} for more on $F$-purity of pairs).
\end{proof}

In the next section, we will calculate the $F$-pure thresholds of certain geometric links. We point out that these calculations are independent of the choice of a generating set of the ideal essentially due to lemma \ref{lemma:LinksEquiv} (alternatively, see \cite[Lemma 2.8]{Kim-Miller-Niu}).  

The next result will be useful in showing that several ideals considered in this paper have a squarefree initial ideal.

\begin{thm} \cite[Theorem 3.13]{Varbaro-Koley} \label{thm:Koley-Varbaro}
    Let $R$ be a polynomial ring over a field. Let $I$ be a radical ideal and $<$ a term order in $R$. Let $g$ be the maximum of the heights of the associated prime ideals of $I$.

    If the initial ideal $\ini_<(I^{(g)})$ contains a squarefree monomial, then $\ini_<(I)$ is a squarefree monomial ideal. 
\end{thm}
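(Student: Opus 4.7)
The plan is to reduce the squarefreeness of $\ini_<(I)$ to an $F$-purity statement. Indeed, for any monomial ideal $J$ in a polynomial ring over a field of positive characteristic, $R/J$ is $F$-pure if and only if $J$ is squarefree (a monomial ideal is radical precisely when it is squarefree). A standard spreading-out and reduction-modulo-$p$ argument lets one assume $K$ has characteristic $p > 0$, since squarefreeness of $\ini_<(I)$ is detected by the initial monomials of a Gr\"obner basis, data that is preserved under such reduction; the hypothesis that $\ini_<(I^{(g)})$ contains a squarefree monomial also survives. The task thus becomes showing that $R/\ini_<(I)$ is $F$-pure, and by Fedder's criterion it suffices to exhibit an element of $\ini_<(I)^{[p]} : \ini_<(I)$ that lies outside $\frakm^{[p]}$.

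The natural candidate is $u^{p-1}$, where $u$ is the given squarefree monomial in $\ini_<(I^{(g)})$. Since $u$ is squarefree, each variable occurs in $u^{p-1}$ to exponent at most $p-1$, so $u^{p-1} \notin \frakm^{[p]}$. To see that $u^{p-1} \in \ini_<(I)^{[p]} : \ini_<(I)$, I would chain three containments. First, the multiplicativity of symbolic powers gives $(I^{(g)})^{p-1} \subseteq I^{((p-1)g)}$, and since $\ini_<(A)\ini_<(B) \subseteq \ini_<(AB)$ holds in a polynomial ring (by multiplicativity of leading forms), one obtains $u^{p-1} \in \ini_<(I^{((p-1)g)})$. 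Second, the key bridge inclusion $I^{((p-1)g)} \subseteq I^{[p]} : I$ transports this to $u^{p-1} \in \ini_<(I^{[p]}:I)$. Third, if $f \in I^{[p]} : I$ then for every $h \in I$ one has $\ini_<(f)\ini_<(h) = \ini_<(fh) \in \ini_<(I^{[p]}) = \ini_<(I)^{[p]}$, and hence $\ini_<(I^{[p]} : I) \subseteq \ini_<(I)^{[p]} : \ini_<(I)$.

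To establish the bridge, for each associated prime $\frakp$ of $I$ (of height at most $g$), the local ring $R_\frakp$ is regular, and a pigeonhole argument on a regular system of parameters shows $\frakp_\frakp^{((p-1)g+1)} \subseteq \frakp_\frakp^{[p]}$. Since $R$ is regular, Frobenius is flat and commutes with intersections, so contracting and intersecting over associated primes gives $I^{((p-1)g+1)} \subseteq I^{[p]}$. Multiplying the containment $I \cdot I^{((p-1)g)} \subseteq I^{((p-1)g+1)}$ (from $I^{(a)}\cdot I^{(b)} \subseteq I^{(a+b)}$) by the preceding inclusion then yields $I^{((p-1)g)} \subseteq I^{[p]} : I$. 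Concatenating the three containments, $u^{p-1} \in \ini_<(I)^{[p]} : \ini_<(I)$, and Fedder's criterion concludes. The main obstacle is bookkeeping the initial-ideal manipulations, especially verifying that $\ini_<$ interacts with colons and symbolic powers in the one-sided ways actually used, and that the pigeonhole step uses exponent $(p-1)g + 1$ rather than $(p-1)g$; a secondary subtlety is confirming that associated primes of height strictly less than $g$ contribute no obstruction, since the bound $(p-1)g + 1$ is sharp only for primes of height $g$ but remains valid for all smaller heights.
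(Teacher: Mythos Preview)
The paper does not contain a proof of this statement: it is quoted verbatim as \cite[Theorem~3.13]{Varbaro-Koley} and used as a black box in the proof of Theorem~\ref{theorem:main}(3). There is therefore nothing in the present paper to compare your argument against.

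That said, your outline is sound and is essentially the approach of the cited source. The reduction to positive characteristic is legitimate (Gr\"obner bases, and hence both the squarefreeness of $\ini_<(I)$ and the presence of a fixed squarefree monomial in $\ini_<(I^{(g)})$, are stable under reduction modulo $p$ for $p\gg 0$). The chain
\[
u^{p-1}\in\ini_<(I^{(g)})^{p-1}\subseteq\ini_<\big((I^{(g)})^{p-1}\big)\subseteq\ini_<\big(I^{((p-1)g)}\big)\subseteq\ini_<\big(I^{[p]}:I\big)\subseteq\ini_<(I)^{[p]}:\ini_<(I)
\]
is correct: the first inclusion is $\ini_<(A)\ini_<(B)\subseteq\ini_<(AB)$, the second is multiplicativity of symbolic powers, the third is the pigeonhole containment $I^{((p-1)g+1)}\subseteq I^{[p]}$ (valid for every associated prime since all have height $\le g$) combined with $I\cdot I^{((p-1)g)}\subseteq I^{((p-1)g+1)}$, and the last uses $\ini_<(I^{[p]})=\ini_<(I)^{[p]}$ together with $\ini_<(fh)=\ini_<(f)\ini_<(h)$. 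Fedder's criterion then gives $F$-purity of $R/\ini_<(I)$, hence squarefreeness. The two subtleties you flag (the exponent $(p-1)g+1$ versus $(p-1)g$, and primes of height strictly below $g$) are handled exactly as you describe.
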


\section{The property $\mathbf{P}$}\label{Section:Property P}

\begin{defn}\label{defn:main}
Let $R$ be a polynomial ring over a field and $I$ an unmixed homogeneous $R$-ideal. Let $\mathbf{P}$ be the following property of the pair $(R,I)$: For a fixed term order $<$ in $R$, there exist homogeneous elements $\underline{\alpha} \colonequals \alpha_1, \ldots, \alpha_{\height(I)}$ in the ideal $I$ such that each initial term $\ini_<(\alpha_i)$ is squarefree and each pair of initial terms $\ini_<(\alpha_i), \ini_<(\alpha_j)$ is mutually coprime for $i \neq j$.
\end{defn}

\begin{rmk} \label{remakr:main def}
Since each pair of initial terms $\ini_<(\alpha_i), \ini_<(\alpha_j)$ is mutually coprime, the monomials $\ini_<(\alpha_1), \ldots, \ini_<(\alpha_{\height(I)})$ form an $R$-regular sequence. Therefore the polynomials $\underline{\alpha}= \alpha_1, \ldots, \alpha_{\height(I)}$ also form an $R$-regular sequence (see, for example, \cite[Proposition 1.2.12]{DetBook}). Since $I$ is homogeneous, as is $(\underline{\alpha})$, we note that $(\underline{\alpha}): I$ is also a homogeneous $R$-ideal.      
\end{rmk}

A key insight of this paper is that $\mathbf{P}$ propagates along generic links:

\begin{lem} \label{lemma:main}
Let $I$ be an unmixed homogeneous ideal in a polynomial ring $R=K[\underline{x}]$ over the field $K$. Assume that the pair $(R,I)$ has the property $\mathbf{P}$. 
\begin{enumerate}[\quad\rm(1)]
    \item For any integer $n\geq 0$, let $L_n(I)$ denote the $n$-th generic link of $I$ in a polynomial extension $R[Y]$. The pair $(R[Y],L_n(I))$ has $\mathbf{P}$. 
    \item If the ideal $I$ is not generated by a regular sequence, then $(\underline{\alpha}): I$ is a link of $I$ in the polynomial ring $R$. The pair $(R, (\underline{\alpha}): I)$ has $\mathbf{P}$.
\end{enumerate}
\end{lem}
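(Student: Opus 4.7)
\textbf{Proof proposal for Lemma \ref{lemma:main}.} Part (2) should fall out immediately from the hypotheses. Since $I$ is unmixed of height $g := \height(I) > 0$ and not generated by a regular sequence, the length-$g$ regular sequence $\underline{\alpha}$ (whose initial terms, being pairwise coprime and squarefree, are automatically an $R$-regular sequence by Remark \ref{remakr:main def}) must sit strictly inside $I$. Thus Proposition \ref{prop:PS} applies with $\fraka = (\underline{\alpha})$, and $J := (\underline{\alpha}):I$ is a link of $I$, with $\height(J) = g$. The very same $\alpha_1,\ldots,\alpha_g$ lie in $(\underline{\alpha}) \subseteq J$ and witness $\mathbf{P}$ for $(R, J)$ using the original term order.

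For part (1), I plan to induct on $n$, the case $n=0$ being the hypothesis. For the inductive step, let $M := L_{n-1}(I) \subseteq S$ (where $S$ is the appropriate polynomial extension of $R$) be given with $\mathbf{P}$ witnessed by $\beta_1,\ldots,\beta_g$ under term order $<$. By Lemma \ref{lemma:LinksEquiv} I am free to compute $L_1(M) = L_n(I)$ using any generating set of $M$, so I will choose one that \emph{begins} with these witnesses: $\mathbf{f} = \beta_1, \ldots, \beta_g, f_{g+1}, \ldots, f_n$. Let $Y = (y_{ij})$ be a $g\times n$ matrix of fresh indeterminates and set $r_i := \sum_{j=1}^n y_{ij} f_j$ for $i = 1,\ldots,g$, so that $(r_1,\ldots,r_g) \subseteq L_n(I)$ and $\height(L_n(I)) = g$. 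The crux is to exhibit a term order on $S[Y]$ for which the $r_i$'s themselves serve as witnesses.

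To that end I will use a product term order: first compare monomials by their $Y$-parts under a lex order on $\{y_{ij}\}$ in which $y_{11}, y_{22}, \ldots, y_{gg}$ are declared the $g$ largest variables (in any order among themselves), then break ties by $<$ on $S$. Because each term of $r_i$ is linear in the $Y$-variables and only $y_{i1},\ldots,y_{in}$ occur in $r_i$, the $Y$-largest coefficient appearing is $y_{ii}$, multiplying $\beta_i$. Hence
\[
\ini(r_i) \;=\; y_{ii}\,\ini_<(\beta_i).
\]
Each such monomial is squarefree (the factor $y_{ii}$ appears to the first power in a fresh variable disjoint from $S$, and $\ini_<(\beta_i)$ is squarefree by hypothesis), and for $i \ne j$ the monomials $y_{ii}\ini_<(\beta_i)$ and $y_{jj}\ini_<(\beta_j)$ share no variable because $y_{ii} \ne y_{jj}$ and $\ini_<(\beta_i), \ini_<(\beta_j)$ are pairwise coprime in $S$. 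This verifies $\mathbf{P}$ for $(S[Y], L_n(I))$ and closes the induction.

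The main obstacle, to my mind, is precisely this term-order engineering: one must simultaneously arrange that the diagonal variable $y_{ii}$ dominates in row $i$ for every $i$, without any single variable being forced to win against itself in a different row. The $Y$-linearity of each $r_i$ is what saves us, since only one $y_{ij}$ appears per term, so lex on $Y$ reduces to choosing a single winner per row, and putting \emph{all} diagonal entries at the top of the lex order suffices. With this observation in hand, the statement reduces to bookkeeping already done in the setup.
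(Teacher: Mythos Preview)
Your proposal is correct and matches the paper's approach essentially line for line: the same induction, the same extension of the witnesses to a full generating set, and the same observation that $\ini(r_i)=y_{ii}\,\ini_<(\beta_i)$ once the diagonal $Y$-variables are made dominant. The only cosmetic difference is that you use a block (product) order with $<$ as tiebreaker on $S$, whereas the paper uses a single lex order on $S[Y]$ extending a variable order on the $x_k$; your version is arguably cleaner since it does not implicitly assume $<$ arises from a variable order.
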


\begin{proof}
We first prove item $(1)$. Assume that the ideal $I$ has height $g>0$ and proceed by induction on $n$. Since the $n$-th generic link of $I$ is the generic link of the $(n-1)$-th generic link, i.e., \[\ L_n(I) = L_1(L_{n-1}(I)),\] it suffices to prove the assertion for $n=1$. Note that the generic link is a homogeneous ideal for each $n>0$.

Let the elements $\alpha_1, \ldots, \alpha_g$ inside the ideal $I$ be as given by $\mathbf{P}$. Extend it to find generators $\alpha_1, \ldots, \alpha_g,\alpha_{g+1}, \ldots, \alpha_r$ of $I$ and fix this generating set. We proceed to find homogeneous elements $\underline{\beta}\colonequals \beta_1, \ldots, \beta_g$ inside the ideal $L_1(I)$ and a term order $<_1$ in the polynomial ring $R[Y]$, where $Y\colonequals (Y_{i,j})$ is a $g \times r$ matrix of indeterminates, such that the pair $(R[Y], L_1(I))$ has $\mathbf{P}$. Recall that \[L_1(I) = Y[\alpha_1 \cdots \alpha_g \, \alpha_{g+1} \cdots \alpha_r]^TR[Y]: (\alpha_1, \ldots, \alpha_r)R[Y].\]

Define the following variable order in $R[Y]$:
\[ Y_{1,1}>Y_{2,2}>\cdots >Y_{g,g}> \text{ the remaining } Y_{i,j}> x_k, \]
where the order $<$ on the indeterminates $x_k$ is the one given in the property $\mathbf{P}$ and the order on ``the remaining $Y_{i,j}$" is arbitrary. Given a monomial $u\in R[Y]$, we write $u_x\in R$ for the image of $u$ under the map of $R$-algebras from $R[Y]$ to $R$ sending each $Y_{i,j}$ to $1$. Consider the following term order $<_1$ on $R[Y]$: For monomials $u$ and $v$ in $R[Y]$, we define
\[u<_1 v \iff \begin{cases}
    u/u_x \mbox{ is lexicographically smaller than }v/v_x, \\
    u/u_x=v/v_x \mbox{ and }u_x<v_x.
\end{cases}\] 
That is, the term order $<_1$ extends the given term order $<$ of $R$ in a lexicographical manner `along the diagonal'.

Let $\beta_1, \ldots \beta_g$ denote the entries of the matrix $Y[\alpha_1 \cdots \alpha_r]^T$. Then, 
\begin{align*}
\ini_{<_1}(\beta_1) &= \ini_{<_1}(Y_{1,1}\alpha_1 +Y_{1,2}\alpha_2+ \ldots + Y_{1,r}\alpha_r)=\ini_{<_1}(Y_{1,1}\alpha_1) = Y_{1,1}\ini_<(\alpha_1),\\
\ini_{<_1}(\beta_2) &= \ini_{<_1}(Y_{2,1}\alpha_1 +Y_{2,2}\alpha_2+ \ldots +Y_{2,r}\alpha_r)= \ini_{<_1}(Y_{2,2}\alpha_2) = Y_{2,2}\ini_<(\alpha_2),\\
\vdots\\
\ini_{<_1}(\beta_g) &= \ini_{<_1}(Y_{g,1}\alpha_1 +Y_{g,2}\alpha_2 + \ldots + Y_{g,r}\alpha_r)= \ini_{<_1}(Y_{g,g}\alpha_g) = Y_{g,g}\ini_<(\alpha_g).
\end{align*}
Since the pair $(R,I)$ has $\mathbf{P}$, we know that for the elements $\underline{\alpha}=\alpha_1, \ldots, \alpha_g$, each initial term $\ini_<(\alpha_i)$ is squarefree and each pair of initial terms $\ini_<(\alpha_i), \ini_<(\alpha_j)$ is mutually coprime for $i \neq j$. It immediately follows from the above display that the same assertions also hold true for the elements $\underline{\beta}=\beta_1, \ldots, \beta_g$ of $R[Y]$.

We now prove item $(2)$. Since $I$ is not generated by a regular sequence, the proper (homogeneous) $R$-ideal $\underline{\alpha}:I$ is a link of the unmixed ideal $I$ by Proposition \ref{prop:PS}. The pair $(R,(\underline{\alpha}): I)$ has $\mathbf{P}$ because 
\[\underline{\alpha} \subseteq (\underline{\alpha}): I. \qedhere \]
\end{proof}

\begin{rmk}
Note that the above lemma holds true without requiring the ideal $I$ to be unmixed. Furthermore, it also holds true if the phrase ``each initial term $\ini_<(\alpha_i)$ is squarefree" is deleted from the property $\mathbf{P}$ or if the sequence of elements $\underline{\alpha}$ is shorter. However, the applications that we have in mind require $\mathbf{P}$ to be as stated.    
\end{rmk}

In the remainder of this section, we show that if the pair $(R,I)$ has $\mathbf{P}$, then important properties of the ideal $I$ like the initial ideal being squarefree, $F$-purity, and the $F$-pure threshold remain invariant under linkage. In fact, we get that the $F$-pure threshold (or the log canonical threshold) of each generic link of $I$, and hence that of $I$ itself, attains its maximal value. That is, the $F$-pure threshold of the $n$-th generic link is equal to the height---an invariant of linkage! 

\begin{thm} \label{theorem:main}
Let $R$ be a polynomial ring over a field $K$. Let $I$ be an unmixed homogeneous $R$-ideal. Assume that the pair $(R,I)$ has the property $\mathbf{P}$.  
\begin{enumerate}[\quad\rm(1)]
     \item If the field $K$ has positive characteristic, then
    \[\fpt(I) = \height(I).\]
    Furthermore, $R/I$ is an $F$-pure ring. 

    \item If the field $K$ has characteristic zero, then 
    \[\lct(I) = \height(I).\]
    Furthermore, $R/I$ has log canonical singularities.  

    \item The initial ideal of $I$ is squarefree.
\end{enumerate}

\begin{proof}
Note that item $(2)$ immediately follows from item $(1)$ since $F$-pure rings in characteristic $p>0$ are log canonical in characteristic zero (\cite[Theorem 3.9]{Hara-Watanabe}) and the log canonical threshold of an ideal is the limit of the $F$-pure thresholds of its characteristic $p>0$ models. In view of this, assume that $K$ has characteristic $p>0$. We begin by proving item $(1)$. Assume throughout that the ideal $I$ has height $g>0$ and let $\underline{\alpha} \colonequals \alpha_1, \ldots, \alpha_{g}$ be as stated in property $\mathbf{P}$.  

Since $\underline{(\alpha)} \subseteq I$ and, by Lemma \ref{lemma:fpt<height}, the $F$-pure threshold of an ideal is bounded above by its height, we have
\[\fpt (\underline{(\alpha)}) \leq \fpt(I) \leq g.\]
We claim that $\fpt (\underline{(\alpha)})=g$. This would give us
\[g = \fpt (\underline{(\alpha)}) \leq \fpt(I) \leq g,\]
so that we have equality throughout. 

We now show that $\fpt (\underline{(\alpha)})=g$. The initial term of \[f \colonequals \alpha_1\ldots \alpha_g \in \underline{(\alpha)}^g\]
is squarefree as the pair $(R,I)$ has $\mathbf{P}$. For any $e>0$, $\frakm ^{[p^e]}$ is a monomial ideal. So, we have
\begin{equation}\label{Equation 1}
f^{p^e-1} = (\alpha_1\ldots \alpha_g)^{p^e-1} \in ((\underline{\alpha}^{[p^e]}):\underline{(\alpha)}) \smallsetminus \frakm ^{[p^e]}.   
\end{equation}
Hence
\[ (\underline{\alpha})^{g(p^e-1)} \nsubseteq \frakm ^{[p^e]}. \quad \text{However} \quad (\underline{\alpha})^{g(p^e-1)+1} \subseteq \frakm ^{[p^e]} \]
by the pigeonhole principle. So we conclude that 
\[ \nu_{({\underline{\alpha}})}(p^e)= \max\{r \; | \; (\underline{\alpha})^r \nsubseteq \frakm^{[p^e]} \} = g(p^e-1) \]
for each $e>0$, so that
\[ \fpt(\underline{\alpha})=g, \]
as required; we thus have  
\[\fpt(I) = \height(I).\]
The $F$-purity of the ring $R/I$ follows from Lemma \ref{lemma:fpt<height}; alternatively, it also follows from Equation \ref{Equation 1} in view of Lemma \ref{corollary:PT}. 
  
We now prove item $(3)$. Since the $R/I$ is $F$-pure, we get that $I$ is a radical ideal. Consider the polynomial \[f \colonequals \alpha_1\ldots \alpha_g \in I^g.\]  Then the initial term
\[\ini_<(f) = \prod_{i=1}^g \ini_<(\alpha_i) \in \ini_<((I^g)\subseteq \ini_<((I^{(g)})\]
and is squarefree, as noted above. All the requirements of Theorem \ref{thm:Koley-Varbaro} are met; we conclude that $I$ has a squarefree initial ideal.
\end{proof}

\begin{cor}\label{cor:Main}
Let $R$ be a polynomial ring over a field $K$. Fix an integer $n\geq 0$ and let $I$ be an unmixed homogeneous $R$-ideal such that the pair $(R,I)$ has $\mathbf{P}$.    
\begin{enumerate}[\quad\rm(1)]
    \item If the field $K$ has positive characteristic, then
    \[\fpt(L_n(I)) = \height(I).\]
    Furthermore, the link $L_n(I)$ defines an $F$-pure ring (in a polynomial extension of $R$).

    \item If the field $K$ has characteristic zero, then
    \[\lct(L_n(I)) = \height(I).\]
    Furthermore, the link $L_n(I)$ defines a log canonical singularity (in a polynomial extension of $R$).
    \item The initial ideal of $L_n(I)$ is squarefree.
\end{enumerate}

\begin{proof}
The generic link $L_n(I)$ is an ideal of a polynomial extension $R[Y]$ of $R$. The ideals $IR[Y]$ and $L_n(I)$ are linked in $R[Y]$ so $\height (L_n(I))=\height(I)$. Crucially, since the pair $(R,I)$ has the property $\mathbf{P}$ (and $L_0(I) = I$), by Lemma \ref{lemma:main}, the pair $(R[Y],L_n(I))$ also has $\mathbf{P}$. Each of the assertions follow immediately from Theorem \ref{theorem:main}.   
\end{proof}
\end{cor}

\normalfont{We now specialize to a link of $I$ in the ambient polynomial ring for which the conclusions of Theorem \ref{theorem:main} hold:}   

\begin{cor}\label{cor:main}
Let $R$ be a polynomial ring over a field $K$ and $I$ an unmixed homogeneous $R$-ideal which is not generated by a regular sequence. Assume that the pair $(R,I)$ has $\mathbf{P}$. Then the ideal $(\underline{\alpha}): I$ is a link of $I$. Moreover,

\begin{enumerate}[\quad\rm(1)]
    \item If the field $K$ has positive characteristic, the link $(\underline{\alpha}): I$ defines an $F$-pure ring and
    \[\fpt((\underline{\alpha}): I) = \height(I).\]

    \item If the field $K$ has characteristic zero, the link $(\underline{\alpha}): I$ defines a log-terminal singularity and
    \[\lct((\underline{\alpha}): I) = \height(I).\]

    \item The initial ideal of $(\underline{\alpha}): I$ is squarefree.
\end{enumerate}
\end{cor}

\begin{proof}
 Since $I$ is not generated by a regular sequence, $(\underline{\alpha}): I$ is a proper $R$-ideal; it is a link of $I$ by Remark \ref{remakr:main def}. By the second assertion of Lemma \ref{lemma:main}, the pair $(R, (\underline{\alpha}): I)$ has $\mathbf{P}$. The arguments for each of the assertions are precisely the same as those of Theorem \ref{theorem:main}.    
\end{proof}
\end{thm}

In \cite[Question 4.5 (a)]{Kim-Miller-Niu}, Kim--Miller--Niu ask if the equality of the log canonical thresholds of a variety $X$ and that of its generic link implies the equality of the log canonical threshold of $X$ and those of the higher generic links of $X$. In view of Corollary \ref{cor:Main}, the examples listed below provide evidence in favor of an affirmative answer: For each pair $(R,I)$ in the examples below, the log canonical threshold of the variety and that of its $n$-th generic link are equal for each $n>0$.

\begin{rmk}
In each of the examples discussed in the next section, the elements $\underline{\alpha}$ discussed in the property $\mathbf{P}$ form a part of a minimal generating set of the ideal of interest. Hence the $n$-th (homogeneous) generic link that we get in each case is actually the $n$-th \textit{minimal} (homogeneous) generic link. Minimal linkage is generally preferable over arbitrary linkage: The minimal link of an almost complete intersection ideal defines a Gorenstein ring. For other favorable properties of minimal links, see \cite[Remark 2.7]{Huneke-Ulrich87}. 
\end{rmk}

\section{Consequences of the Property $\mathbf{P}$}
\addtocontents{toc}{\protect\setcounter{tocdepth}{2}}
\subsection{$F$-regularity of the generic link of generic height $3$ Gorenstein ideals}\label{Section:Submax Pfaffians}

Let $I$ be a height $3$ ideal in a polynomial ring $R$ such that $R/I$ is a Gorenstein ring. By the Buchsbaum--Eisenbud structure theorem \cite{Buchsbaum-Eisenbud}, the generators of $I$ are given by the size $2n$ pfaffians of a $(2n+1)\times (2n+1)$ skew-symmetric matrix. If the entries of this matrix $X$ are indeterminates, then $I\colonequals\Pf_{2n}(X)$ is said to be a `generic' height $3$ Gorenstein ideal in the ring $R=K[X]$. Note that any other such ideal can be obtained by specializing the entries of $X$. 

\begin{prop}\label{prop:pfaffian}
The pair $(K[X],\Pf_{2n}(X))$ has $\mathbf{P}$. 
\end{prop}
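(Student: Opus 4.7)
The plan is to exhibit three elements $\alpha_1,\alpha_2,\alpha_3$ of $\Pf_{2n}(X)$ (an ideal of height $3$) whose initial terms, under a suitable term order on $K[X]$, are squarefree and pairwise coprime. Among the $2n+1$ standard sub-pfaffian generators of $\Pf_{2n}(X)$, I would pick the three ``evenly spaced'' ones: for $t \in \{1,\,n+1,\,2n+1\}$, let $S_t \colonequals \{1,\ldots,2n+1\}\setminus\{t\}$, let $X_{S_t}$ denote the alternating submatrix of $X$ on rows and columns indexed by $S_t$, and set $\alpha_t \colonequals \Pf(X_{S_t})$. For the term order, take the lexicographic order on $K[X]$ induced by the variable ordering $x_{i,j} > x_{k,l}$ iff $i < k$, or $i = k$ and $j > l$ (smaller row index is heavier; within a row, the variable furthest to the right is heaviest).

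The central computation is that under this order, for any $2n$-element subset $S = \{s_1 < s_2 < \cdots < s_{2n}\}$ of $\{1,\ldots,2n+1\}$, the initial term of $\Pf(X_S)$ is the anti-diagonal monomial
\[
m_S \;\colonequals\; x_{s_1,s_{2n}} \cdot x_{s_2,s_{2n-1}} \cdots x_{s_n,s_{n+1}}.
\]
This follows by induction on $n$. Expanding the pfaffian as a signed sum over perfect matchings $M$ of $S$, each $M$ contributes a squarefree monomial $\prod_{(a,b)\in M,\,a<b} x_{a,b}$; distinct matchings produce distinct monomials, so no cancellation occurs and the initial term of $\Pf(X_S)$ is the lex-maximum of these monomials. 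Now $x_{s_1,s_{2n}}$ is strictly the largest variable that can appear in $\Pf(X_S)$ (smallest row index $s_1$ of $S$, largest column index $s_{2n}$), so the leading matching must contain the pair $(s_1,s_{2n})$; restricting to $S \setminus \{s_1,s_{2n}\}$ and applying the inductive hypothesis identifies the anti-diagonal matching as the lex-maximum. (Alternatively, this is a special case of known anti-diagonal Gr\"obner bases for pfaffian ideals in the spirit of Herzog--Trung and De Negri, but a full Gr\"obner basis is not required here.)

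Once the initial terms are in hand, verifying $\mathbf{P}$ is a direct bookkeeping check. Each $m_{S_t}$ is a product of $n$ distinct variables, hence squarefree. Direct enumeration of the anti-diagonal pairs gives: the variables in $m_{S_1}$ are $x_{k+1,\,2n+2-k}$ for $k = 1,\ldots,n$ (each index-pair summing to $2n+3$); those in $m_{S_{n+1}}$ are $x_{k,\,2n+2-k}$ (index-sum $2n+2$); those in $m_{S_{2n+1}}$ are $x_{k,\,2n+1-k}$ (index-sum $2n+1$). Since these three index-sums are distinct, the three sets of variables are pairwise disjoint, so $m_{S_1},\,m_{S_{n+1}},\,m_{S_{2n+1}}$ are pairwise coprime. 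This establishes $\mathbf{P}$.

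The main obstacle is the anti-diagonal initial-term computation for a single pfaffian. The chosen lex order makes this routine: $x_{s_1,s_{2n}}$ is unambiguously the largest variable that can occur in $\Pf(X_S)$, distinct matchings yield distinct squarefree monomials so cancellation cannot promote a lower term to the initial term, and the rest of the argument is a clean induction on $n$.
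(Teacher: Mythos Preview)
Your proposal is correct and essentially identical to the paper's proof: you choose the same three sub-pfaffians (deleting row/column $1$, $n+1$, $2n+1$) and the same lexicographic order (smaller row index dominant, larger column index within a row), obtaining the same anti-diagonal initial terms. The paper simply states these initial terms without justification, whereas you supply the inductive argument and the tidy index-sum observation ($2n+1$, $2n+2$, $2n+3$) for pairwise coprimality; both are welcome additions but do not change the approach.
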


\begin{proof}
 Let
 \[\underline{\alpha}\colonequals [1,2,\ldots,2n], [1,2,\ldots,n,n+2,\ldots,2n+1],[2,3,\ldots,2n+1] \]
 denote the pfaffians of $X$ with the given rows and columns and fix the lexicographical order in $K[X]$ induced by the variable order
 \[x_{1,2n+1}>x_{1,2n}> \cdots >x_{1,2}>x_{2,2n+1}>\cdots > x_{2,3}>x_{3,2n+1}>\cdots >x_{2n,2n+1}  \]
which moves from right to left along any row of the matrix $X$ and then vertically downwards. The initial terms
\begin{align*}
\ini_<([1,2,\ldots,2n]) &= \prod_{i=1}^n x_{i,2n+1-i}, \\
\ini_<([1,2,\ldots, n,n+2,\ldots, 2n+1]) &= \prod_{i=1}^n x_{i,2n+2-i}, \\
\ini_<([2,3,\ldots, 2n+1]) &= \prod_{i=2}^{n+1} x_{i,2n+3-i}
\end{align*}
are precisely as required for the property $\mathbf{P}$ to hold. Since the ideal $\Pf _{2n}(X)$ has height $3$, we are done.
\end{proof}

\begin{cor}\label{cor:FPTPfaffian}
 Let $X$ be a $(2n+1)\times (2n+1)$ skew-symmetric matrix of indeterminates and $I=\Pf_{2n}(X)$ be the ideal of submaximal pfaffians in $K[X]$; set $n\geq 0$. Then
 \[\fpt(L_n(I)) = 3.\] In particular, $\fpt(I) = 3$.
\end{cor}

\begin{proof}
This is immediate from Proposition \ref{prop:pfaffian} in view of Corollary \ref{cor:Main}.    
\end{proof}

Proposition \ref{prop:pfaffian} has a remarkable consequence: The generic link of a generic height $3$ Gorenstein ideal defines an $F$-regular ring.


\begin{thm}\label{theorem:PfaffianFregular}
Let $X\colonequals (x_{i,j})$ be a $(2n+1)\times(2n+1)$ skew-symmetric matrix of indeterminates, $K$ a field, and $R\colonequals K[X]$. Let $\Pf_{2n}(X)$ denote the ideal generated by the size $2n$ pfaffians of $X$.
\begin{enumerate}[\quad\rm(1)]
    \item If $K$ is an $F$-finite field of characteristic $p>0$, the generic link of $R/\Pf_{2n}(X)$ is $F$-regular.
    \item If $K$ has characteristic $0$, the generic link of $R/\Pf_{2n}(X)$ has rational singularities. 
\end{enumerate}

\begin{proof}
Assertion $(2)$ follows from $(1)$ since $F$-regular rings are $F$-rational and $F$-rational rings have rational singularities by \cite[Theorem 4.3]{Smith}. We therefore concentrate on the case where the characteristic of $K$ is positive.    

Let $Y\colonequals (Y_{i,j})$ be a generic $3 \times (2n+1)$ matrix of indeterminates and $S\colonequals R[Y]$. The proof will proceed by induction on $n$. Note that if $n=1$, the ideal $\Pf_{2n}(X) = \Pf_2(X)$ is the homogeneous maximal ideal of a polynomial ring in $3$ variables. So the $F$-regularity of the generic link is a special case of \cite[Theorem 5.1]{Pandey-Tarasova}. For the remainder of the proof, assume $n\geq 2$. Let $\Delta_1,\ldots,\Delta_{2n+1}$ denote the pfaffians of $X$ with 
\[\Delta_{2n}=[1,3,\ldots,2n+1] \quad \text{and} \quad \Delta_{2n+1}=[2,3,\ldots,2n+1].\]
We have,  
\[J = Y[\Delta_1,\ldots,\Delta_{2n+1}]^TS: (\Delta_1,\ldots,\Delta_{2n+1})S,\]
and $S/J$ is the generic link of $R/\Pf_{2n}(X)$. 

We first show that the ring $(S/J)_{x_{1,2}}$ is $F$-regular. To prove the inductive step $n>1$, we show that the localization $(S/J)_{x_{1,2}}$ is a faithfully flat extension of the generic link $S'/J'$ of the size $2n-2$ pfaffians of a generic skew-symmetric $(2n-1)\times (2n-1)$ matrix $X'$, with $S'=K[X']$. 

Let $\alpha_i$ denote the $i$-th entry of the matrix $Y[\Delta_1,\ldots,\Delta_{2n+1}]^T$ for $1\leq i \leq 2n+1$; set $\alpha_i'\colonequals \alpha_i/x_{1,2}$ for $i=1,2,3$ and $\Delta_j' \colonequals \Delta_j/x_{1,2}$ for $j=1,\ldots, 2n+1$. Consider the linear change of coordinates,
\[x'_{i-2,j-2} \colonequals x_{i,j} + \frac{x_{1,j}x_{2,i}-x_{1,i}x_{2,j}}{x_{1,2}} \qquad i,j=3,\ldots,2n+1 \]
and set $X'\colonequals (x'_{i,j})$, a generic skew-symmetric $(2n-1)\times (2n-1)$ matrix. By 
\cite[Lemma 1.3]{Barile}, there exists an invertible matrix $E$, with entries in $R_{x_{1,2}}$, such that

\[E^TXE = \left(\begin{array}{@{}c|c@{}}
  \begin{matrix}
  0 & 1 \\
  -1 & 0
  \end{matrix}
  & \bigzero \\
\hline
  \bigzero &
  \begin{matrix}
  0 & x'_{1,2} & \cdots & x'_{1,2n-1} \\
  -x'_{1,2} & 0 & \cdots & x'_{2,2n-1} \\
  \vdots & \vdots & \ddots & \vdots \\
  -x'_{1,2n-1} & -x'_{2,2n-1} & \cdots & 0
  \end{matrix}
\end{array}\right) . \]

A routine, albeit tedious, calculation gives 
\[\alpha_i' = Y_{i,1}'\Delta_1' + \ldots + Y_{i,2n-1}'\Delta_{2n-1}' \qquad i=1,2,3, \]
where $\Delta_1',\ldots, \Delta_{2n-1}'$ are the size $2n-2$ pfaffians of $X'$ and the elements $Y'_{i,j}$ are algebraically independent over the field $K$; note that
\[\Pf_{2n}(X) = \Pf_{2n}(E^TXE) = \Pf_{2n-2}(X'). \]
Let $Y'\colonequals (Y'_{i,j})$ be a $3 \times (2n-1)$ (generic) matrix; set $R' \colonequals K[X']$, $S' \colonequals R[Y']$, and 
\[J' \colonequals Y'[\Delta_1',\ldots, \Delta_{2n-1}']S':(\Delta_1',\ldots, \Delta_{2n-1}')S'. \]
Then we get an isomorphism
\[ (S/J)_{x_{1,2}} \cong S'/J'[x_{1,3},\ldots ,x_{1,2n+1},x_{2,3}, \ldots, x_{2,2n+1},Y_{i,j}|i=1,2,3; \; j=2n,2n+1], \]
as desired (where $\Delta_i \mapsto \Delta_i'$ for $i=1,\ldots, 2n-1$). Therefore, the ring $(S/J)_{x_{1.2}}$ is $F$-regular by induction.

We now construct an $S/J$-linear splitting of the map \[ S/J \to F_*(S/J) \qquad \text{where} \qquad 1 \mapsto F_*(x_{1,2})\] in order to establish the $F$-regularity of the generic link $S/J$. After reindexing, let
\[\Delta_1 = [1,\ldots,2n], \; \Delta_2 = [1,\ldots,n,n+2,\ldots,2n+1], \; \text{and}\; \Delta_3 = [2,\ldots,2n+1] \]
denote the pfaffians of $X$ with the given rows and columns and let $\beta_1 , \beta_2, \beta_3$ denote the first three entries of the matrix $Y[\Delta_1,\ldots,\Delta_{2n+1}]^T$. By Lemma \ref{lemma:main} and Proposition \ref{prop:pfaffian}, we get that the initial term of the polynomial
\[f \colonequals \beta_1 \beta_2 \beta_3 \]
is squarefree with respect to the term order $<_1$ in Lemma \ref{lemma:main} (constructed by extending the term order $<$ in Proposition \ref{prop:pfaffian}). Furthermore, we have,
\[f^{p-1} \in J^{[p]}:J\]
by Lemma \ref{corollary:PT}. Note that $x_{1,2}$ does not divide the initial term $\ini_{<_1}(f)$. In view of this, consider the polynomial
\[ g \colonequals \prod _{\substack {Y_{i,j},x_{k,l} \notin \ini_{<_1}(f) \\ (k,l)\neq(1,2)}}  Y_{i,j}x_{k,l}f\]
and note that $g^{p-1} \in J^{[p]}:J$. The Frobenius trace map
\[\Tr(F_*(x_{1,2}^{p-2}g^{p-1})): F_*(S/J) \to S/J \quad \text{sends} \quad F_*(x_{1,2})\mapsto 1 \]
to give the required splitting. The $F$-regularity of the generic link $S/J$ now follows from \cite[Theorem 5.9(a)]{Hochster-Huneke94a}.
\end{proof}
\end{thm}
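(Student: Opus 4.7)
The plan is to reduce the characteristic zero statement to the positive characteristic one, using that $F$-regular rings are $F$-rational and $F$-rational singularities lift to rational singularities in characteristic zero via \cite[Theorem 4.3]{Smith} together with reduction mod $p$. So I focus on the case of an $F$-finite field $K$ of characteristic $p>0$, and I proceed by induction on $n$. The base case $n=1$ is handled immediately: $\Pf_{2}(X)$ is just the homogeneous maximal ideal of a polynomial ring in three variables, for which the $F$-regularity of the generic link is already known in the literature. For the inductive step, my target is the Hochster--Huneke criterion \cite[Theorem 5.9(a)]{Hochster-Huneke94a}: it suffices to exhibit a single element $c \in S/J$, not contained in any minimal prime, such that $(S/J)_c$ is $F$-regular and such that the Frobenius map $1 \mapsto F^e_*(c)$ admits an $S/J$-linear splitting for some $e>0$. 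I will take $c = x_{1,2}$.

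To handle the localization $(S/J)_{x_{1,2}}$, the strategy is to exploit the well-known fact (essentially \cite[Lemma 1.3]{Barile}) that after inverting the entry $x_{1,2}$, the generic alternating matrix $X$ is congruent, via an invertible matrix over $R_{x_{1,2}}$, to a block-diagonal matrix with a $2 \times 2$ hyperbolic block and a $(2n-1)\times(2n-1)$ generic alternating block $X'$. Under this change of coordinates the pfaffians $\Delta_i$ of $X$ become the pfaffians $\Delta_i'$ of the smaller matrix $X'$ (after suitable rescaling), and the generic combinations $\alpha_i$ become generic combinations of these $\Delta_i'$ with \emph{fresh} indeterminate coefficients $Y_{i,j}'$. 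The remaining variables---the entries of the first two rows of $X$ and the columns of $Y$ not used in the new combinations---become genuinely free over the contracted ring. This identifies $(S/J)_{x_{1,2}}$ as a polynomial ring over the generic link of $\Pf_{2n-2}(X')$, which by the inductive hypothesis is $F$-regular.

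To construct the splitting $1 \mapsto F_*(x_{1,2})$, I combine Proposition \ref{prop:pfaffian} with Lemma \ref{lemma:main}: the property $\mathbf{P}$ propagates to $(S,J)$, giving three elements $\beta_1,\beta_2,\beta_3 \in J$ whose initial monomials (with respect to a lexicographic order extending the one from Proposition \ref{prop:pfaffian}) are squarefree, pairwise coprime, and crucially do \emph{not} involve $x_{1,2}$. Setting $f = \beta_1\beta_2\beta_3$, Lemma \ref{corollary:PT} gives $f^{p-1}\in J^{[p]}:J$. I then multiply $f$ by the product of all variables of $S$ that do not divide $\ini(f)$---except $x_{1,2}$---to obtain a polynomial $g$ whose initial term is the squarefree product of every variable of $S$ other than $x_{1,2}$, still satisfying $g^{p-1}\in J^{[p]}:J$. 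The Frobenius trace $\Tr\bigl(F_*(x_{1,2}^{p-2}\,g^{p-1})\bigr)$ then sends $F_*(x_{1,2})$ to a unit of $S/J$, giving the required splitting after rescaling.

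The principal obstacle I anticipate is the explicit change-of-variables computation in the inductive step: one must verify that the congruence over $R_{x_{1,2}}$ truly turns the original generic pfaffian data into the generic pfaffian data of a $(2n-1)\times(2n-1)$ matrix, carrying $J$ to $J'$ and realizing $(S/J)_{x_{1,2}}$ as a polynomial ring over $S'/J'$. The splitting construction, by contrast, is almost a formal consequence of property $\mathbf{P}$ combined with Lemma \ref{corollary:PT}, provided one chose the initial monomials in Proposition \ref{prop:pfaffian} so that $x_{1,2}$ is absent---which is indeed the case for the Pfaffians $[1,2,\ldots,2n]$, $[1,\ldots,n,n+2,\ldots,2n+1]$, $[2,\ldots,2n+1]$ under the chosen lexicographic order.
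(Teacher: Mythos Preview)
Your proposal is correct and follows essentially the same argument as the paper: induction on $n$ with the $n=1$ base case from \cite{Pandey-Tarasova}, the Hochster--Huneke criterion applied with $c=x_{1,2}$, the Barile-type congruence to identify $(S/J)_{x_{1,2}}$ with a polynomial extension of the smaller generic link, and the Frobenius-trace splitting built from property~$\mathbf{P}$ via Proposition~\ref{prop:pfaffian}, Lemma~\ref{lemma:main}, and Lemma~\ref{corollary:PT}. You have also correctly flagged the explicit change-of-coordinates verification as the only genuinely laborious step, which the paper likewise describes as ``routine, albeit tedious.''
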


\begin{rmk}
The generators of the generic link of height $3$ Gorenstein ideals are known: \cite[\S 4.10]{Kustin-Ulrich-Memoirs} gives the generators of the generic link as nested pfaffians of varying sizes of an `almost skew-symmetric' matrix (see \cite[Theorem 4.7]{Kustin-Ulrich} for an easier proof). We emphasize that due to our approach of viewing the generic link via the property $\mathbf{P}$, we did \emph{not} need its generators to establish its $F$-regularity (or its $F$-pure threshold, or the squarefreeness of its initial ideal). 
\end{rmk}

\begin{cor}\label{cor:Uni Link F rational}
Let $I$ be a generic height $3$ Gorenstein ideal of $R=K[\underline{x}]$ over an $F$-finite field $K$ of characteristic $p>0$ and $L^k(I)$ the $k$-th universal link of $I$ in a transcendental extension $S$ of $R$. Then $S/L^k(I)$ is an $F$-rational ring for each $k\geq 0$.      
\end{cor}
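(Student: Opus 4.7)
The plan is to induct on $k$. For $k=0$, the ring $K[X]/\Pf_{2n}(X)$ is strongly $F$-regular by classical results on generic Pfaffian rings, and hence $F$-rational. For $k=1$, Theorem~\ref{theorem:PfaffianFregular} gives that $R[Y]/L_1(I)$ is $F$-regular; the first universal link $S/L^1(I)$ is obtained from this generic link by inverting the entries $Y_{i,j}$, i.e.\ by localization, so it inherits $F$-regularity and is a fortiori $F$-rational.

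For $k\geq 2$, I would reduce to the generic setting via the Huneke--Ulrich essential-deformation theorem (\cite[Proposition~2.14, Theorem~2.17]{Huneke-Ulrich87}): the ring $S/L^k(I)$ is essentially a deformation of the $k$-th generic link $R[\underline{Y}]/L_k(I)$, and $F$-rationality descends under essential deformation and localization. It therefore suffices to show that $L_k(I)$ defines an $F$-rational ring. By Lemma~\ref{lemma:main}, the pair $(R[\underline{Y}],L_k(I))$ has property $\mathbf{P}$ for every $k\geq 0$, and I would argue by a nested induction on $k$ that the first generic link of an $F$-rational ring equipped with property $\mathbf{P}$ is again $F$-rational.

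The second half of the proof of Theorem~\ref{theorem:PfaffianFregular}---the Frobenius trace construction $\Tr(F_*(x^{p-2}g^{p-1}))$ built from the $\mathbf{P}$-sequence $\underline{\beta}$ in $L_k(I)$ multiplied by all variables absent from $\ini_<(\beta_1\beta_2\beta_3)$ other than a chosen $x$, and invoking Lemma~\ref{corollary:PT}---uses only property $\mathbf{P}$ and transfers verbatim, producing an $R[\underline{Y}]/L_k(I)$-linear splitting of Frobenius sending $F_*(x)\mapsto 1$. Combined with $F$-rationality of the localization at $x$---which I would obtain from the inductive hypothesis on $L_{k-1}(I)$ via a further essential-deformation argument realizing this localization as a polynomial extension of (a localization of) a previously handled ring---an $F$-rational analogue of the test-element criterion \cite[Theorem~5.9(a)]{Hochster-Huneke94a} then yields $F$-rationality of $L_k(I)$, and therefore of $L^k(I)$.

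The main obstacle is precisely this ``dimension reduction'' step: the first half of the proof of Theorem~\ref{theorem:PfaffianFregular} exploited a Pfaffian-specific linear change of coordinates (via Barile's lemma) to identify a localization of the generic link with a smaller Pfaffian generic link. In the iterated setting this device is unavailable, so one must instead extract the required $F$-rational localization directly from the inductive hypothesis through the Huneke--Ulrich essential-deformation machinery, and verify that the element $x$ furnished by the $\mathbf{P}$-sequence is a legitimate choice for both the splitting and the localization. Once this reduction is carried out, the trace argument and test-element criterion close the proof.
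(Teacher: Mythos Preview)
Your proposal has a genuine gap, and it misses a much simpler route.

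The paper does not attempt to show that every generic link $L_k(I)$ is $F$-rational. Instead it invokes the parity statement \cite[Corollary~2.15]{Huneke-Ulrich87}: every even universal link $S/L^{2k}(I)$ is essentially a deformation of $R/I$, and every odd universal link $S/L^{2k+1}(I)$ is essentially a deformation of $S/L^1(I)$. Since $R/I$ (the generic Pfaffian ring) is $F$-regular, and $S/L^1(I)$ is a localization of the $F$-regular ring $R[Y]/L_1(I)$ from Theorem~\ref{theorem:PfaffianFregular}, both base rings are $F$-rational. Because $F$-rationality deforms and localizes \cite[Theorem~4.2(h)]{Hochster-Huneke94a}, every $S/L^k(I)$ is $F$-rational. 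Only the cases $k=0$ and $k=1$ are handled directly; the rest is a one-line application of the parity result.

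Your approach, by contrast, tries to establish $F$-rationality of $L_k(I)$ for all $k$ by rerunning the argument of Theorem~\ref{theorem:PfaffianFregular} at each stage. The obstacle you yourself flag is fatal as stated. The induction in Theorem~\ref{theorem:PfaffianFregular} works because Barile's lemma furnishes an explicit isomorphism identifying a localization of the generic link of the $(2n+1)$-Pfaffians with a polynomial extension of the generic link of the $(2n-1)$-Pfaffians---a Pfaffian-specific coordinate change. No analogous structural statement relates a localization of $L_k(I)$ to $L_{k-1}(I)$, and the results you cite (\cite[Proposition~2.14, Theorem~2.17]{Huneke-Ulrich87}) do not supply one: they say that $L_k(I)$ and $L^k(I)$ are (essentially) deformations of any $k$-step link, not that a localization of $L_k(I)$ at some element is controlled by $L_{k-1}(I)$. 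Without that localization step the trace-splitting construction from property $\mathbf{P}$ yields only $F$-purity, not $F$-rationality, and your induction cannot close. The missing ingredient is precisely the parity result the paper uses, which renders the whole inductive apparatus unnecessary.
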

\begin{proof}
 Due to \cite[Corollary 2.15]{Huneke-Ulrich87}, the even universal links $S/L^{2k}(I)$ (respectively the odd universal links $S/L^{2k+1}(I)$) are essentially a deformation of $R/I$ (respectively that of $S/L^1(I)$). That is, the universal links are obtained by a finite sequence of deformations and localizations at prime ideals beginning from $R/I$ or $S/L^1(I)$, in the respective cases. Note that generic pfaffian rings are $F$-regular and the generic links of generic height $3$ Gorenstein ideals are $F$-regular by Theorem \ref{theorem:PfaffianFregular}. Since the universal link is a localization of the corresponding generic link at a prime ideal, the assertion follows as $F$-regular rings are $F$-rational and $F$-rationality deforms (and is a local property) due to \cite[Theorem 4.2 (h)]{Hochster-Huneke94a}.       
\end{proof}

\subsection{Minors of a generic matrix: A curious dichotomy}\label{Section: Minors}
While the generic link of the maximal minors of a generic matrix is $F$-regular \cite[Theorem 5.6]{Pandey-Tarasova}, in this section, we show that the generic link of the non-maximal (even submaximal) minors is usually not even $F$-injective; the point is that they do not posses the property $\mathbf{P}$. Let $X\colonequals (x_{ij})$ be an $m \times n$ matrix of indeterminates for positive integers $m \leq n$ and $R\colonequals K[X]$. Let $I_t(X)$ denote the $R$-ideal generated by the size $t$-minors of the matrix $X$.

\begin{prop} \label{prop:maximal minors}
The pair $(K[X]), I_m(X))$ has $\mathbf{P}$.    
\end{prop}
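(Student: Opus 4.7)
The plan is to exhibit $n - m + 1$ maximal minors whose leading terms are squarefree diagonal monomials using pairwise disjoint sets of variables, where $n - m + 1 = \height I_m(X)$ is the classical height of the maximal minor ideal.

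First, I would fix the lexicographic order on $K[X]$ induced by the variable order that moves left to right along each row and then top to bottom, i.e.\ $x_{1,1} > x_{1,2} > \cdots > x_{1,n} > x_{2,1} > \cdots > x_{m,n}$. A direct calculation shows that for any column subset $j_1 < j_2 < \cdots < j_m$, the initial term of the maximal minor $[j_1,\ldots,j_m]$ is the ``main diagonal'' product $x_{1,j_1}x_{2,j_2}\cdots x_{m,j_m}$: among the $m!$ signed monomials appearing in the determinant, this is the unique one in which row $1$ is paired with the smallest column, row $2$ with the next smallest, and so on, so it is maximal in lex.

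Next, I would take as the candidate sequence $\underline{\alpha}$ the $n-m+1$ ``consecutive'' maximal minors
\[
\alpha_k \;\colonequals\; [k,\,k+1,\,\ldots,\,k+m-1], \qquad k = 1, 2, \ldots, n-m+1.
\]
By the computation above,
\[
\ini_<(\alpha_k) \;=\; x_{1,k}\,x_{2,k+1}\cdots x_{m,k+m-1},
\]
which is manifestly squarefree. For $k \neq k'$, the variable $x_{i,k+i-1}$ appearing in the $i$-th factor of $\ini_<(\alpha_k)$ differs from $x_{i,k'+i-1}$ (they share the row index $i$, but have different column indices), so the supports of $\ini_<(\alpha_k)$ and $\ini_<(\alpha_{k'})$ are disjoint, hence the two monomials are coprime.

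Finally, since the height of $I_m(X)$ equals $n-m+1$ (a classical fact about maximal minors of a generic matrix), the sequence $\alpha_1,\ldots,\alpha_{n-m+1}$ has the correct length, and all the conditions in Definition \ref{defn:main} are verified. There is no real obstacle here; the argument is essentially the observation that ``consecutive'' maximal minors have disjoint diagonals, together with the standard fact that the diagonal of a maximal minor is its initial term under the natural lex order.
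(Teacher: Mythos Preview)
Your proof is correct and is essentially identical to the paper's own argument: the same lexicographic order, the same choice of consecutive maximal minors $[k,k+1,\ldots,k+m-1]$ for $1\le k\le n-m+1$, and the same diagonal initial terms. You add a bit more justification for why the diagonal is the lex-leading term and why the initial monomials are pairwise coprime, but the approach and content match the paper exactly.
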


\begin{proof}
Let \[\underline{\alpha}\colonequals [1,m], [2,m+1], \ldots, [n-m+1,n]\] denote the size $m$ minors of $X$ with the given adjacent columns and fix the lexicographical order in $K[X]$ induced by the variable order
\[x_{1,1} > x_{1,2} > \dots > x_{1,n} > x_{2,1} > \dots > x_{2,n} > \dots > x_{m,n}.\]
For $1 \leq i \leq n-m+1$, by determinant expansion along the first row of $X$, we get 
\[\ini_<\big([i,m+i-1]\big) = x_{1,i}x_{2,i+1}\dots x_{m,m+i-1},\] which is the product of the variables in the main diagonal of the minor $[i,m+i-1]$. Since the determinantal ideal $I_m(X)$ has height $n-m+1$, we are done.
\end{proof}
\begin{rmk}\label{remark:RecoverFPT}
In view of Corollary \ref{cor:Main}, Proposition \ref{prop:maximal minors} simultaneously recovers the $F$-pure threshold (respectively log canonical threshold) of the ideal of maximal minors calculated in \cite[Theorem 1.2]{Miller-Singh-Varbaro} (respectively \cite[Theorem 5.6]{Docampo}, \cite{Johnson}) and the log canonical threshold of the generic links of maximal minors calculated in \cite[Theorem 1]{Kim-Miller-Niu} by a simpler method and extends these calculations to the $n$-th generic link for each $n>0$.   
\end{rmk}

\begin{lem}\label{lemma:a-invariant}
Let $R = K[x_1,\ldots, x_n]$ and $I$ be an unmixed homogeneous $R$-ideal of height $g$ for which each generator has degree $d$. Then the (top) $a$-invariant of the first universal link of $I$ is $d(g-1)-n$.     
\end{lem}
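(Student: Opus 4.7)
The plan is to realize the universal link $L^{1}(I)$ as a homogeneous ideal in a standard graded polynomial ring, identify its canonical module via graded linkage duality, and then read off the top $a$-invariant from the initial degree of that canonical module.

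First, I would pass to a graded framework appropriate for universal linkage. Let $f_{1},\ldots,f_{r}$ be a minimal homogeneous generating set of $I$, each of degree $d$, and let $Y=(Y_{ij})$ be a $g\times r$ matrix of indeterminates that we make invertible; this is precisely the ``invertible indeterminates'' stipulation in the definition of universal link. Set $S\colonequals K(Y)[x_{1},\ldots,x_{n}]$, the standard graded polynomial ring in $n$ variables over the base field $K(Y)$. Then $\alpha_{i}\colonequals\sum_{j}Y_{ij}f_{j}$, the entries of $Y[f_{1}\cdots f_{r}]^{T}$, form an $S$-regular sequence of forms of degree $d$ (by \cite{Hochster73}), and $L^{1}(I)=(\underline\alpha):_{S}IS$. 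I may assume $I$ is not a complete intersection, for otherwise $(\underline\alpha)=IS$ in the universal setting and $L^{1}(I)=S$.

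Second, I would compute the canonical module by linkage duality. A direct Koszul computation gives $\omega_{S/(\underline\alpha)}\cong (S/(\underline\alpha))(gd-n)$ for the complete intersection of $g$ degree-$d$ forms in the $n$-variable polynomial ring $S$. The standard change-of-rings identification $\operatorname{Ext}^{g}_{S}(-,\omega_{S})\cong \operatorname{Hom}_{S/(\underline\alpha)}(-,\omega_{S/(\underline\alpha)})$ on $S/(\underline\alpha)$-modules (which requires no Cohen--Macaulay hypothesis, only that $(\underline\alpha)$ is a length-$g$ regular sequence), combined with the linkage identity $IS=(\underline\alpha):_{S}L^{1}(I)$, then yields
\[
\omega_{S/L^{1}(I)}\;\cong\;\operatorname{Hom}_{S/(\underline\alpha)}\bigl(S/L^{1}(I),\,\omega_{S/(\underline\alpha)}\bigr)\;\cong\;\bigl(IS/(\underline\alpha)\bigr)(gd-n).
\]

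Third, I would determine the initial degree of $IS/(\underline\alpha)$. Since $IS$ is generated in degree $d$, the quotient vanishes below degree $d$. In degree $d$, the $K(Y)$-subspace $(\underline\alpha)_{d}\subseteq (IS)_{d}$ has dimension exactly $g$ by the genericity of $Y$, whereas $(IS)_{d}$ has $K(Y)$-dimension $\dim_{K}I_{d}\geq g+1$ because $I$ is not a complete intersection; hence $(IS/(\underline\alpha))_{d}\neq 0$, so $IS/(\underline\alpha)$ starts in degree exactly $d$. Consequently $\omega_{S/L^{1}(I)}$ starts in degree $d-(gd-n)=n-(g-1)d$, and the top $a$-invariant of $S/L^{1}(I)$ equals $-(n-(g-1)d)=d(g-1)-n$, as claimed.

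The only real subtlety is the graded change-of-rings identification of $\omega_{S/L^{1}(I)}$ with $IS/(\underline\alpha)$ up to the explicit degree shift $gd-n$; once that is in place, everything else reduces to a degree count and the dimension comparison ensuring that $IS/(\underline\alpha)$ is nonzero in its lowest possible degree.
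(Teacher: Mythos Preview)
Your proposal is correct and follows essentially the same route as the paper: identify the graded canonical module of the universal link as $IS/(\underline\alpha)$ shifted by $gd-n$, then read off the $a$-invariant from its initial degree. The only difference is cosmetic---the paper cites \cite[Lemma~2.3]{Kustin-Ulrich} for the canonical module identification, whereas you derive it directly via the Koszul computation of $\omega_{S/(\underline\alpha)}$ and change-of-rings; you also spell out the dimension count showing $(IS/(\underline\alpha))_{d}\neq 0$, which the paper leaves implicit.
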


\begin{proof}
 Let $I=(f_1,\ldots,f_k)\subset R$, $Y:=(Y_{i,j})$ be a $g\times k$ matrix, and $S:=R_{(x_1,\ldots,x_n)}(Y_{i,j})$. Let
 \[L^1(I) \colonequals Y[f_1,\ldots, f_k]^TS:(f_1,\ldots, f_k)S = (\underline{\alpha}):IS \]
 denote the (first) universal link of $I$. Note that, since the ideal $I$ is generated in a single degree, the factor ring $S/L^1(I)$ is the localization of a graded ring at its homogeneous maximal ideal. Hence its top local cohomology module
 \[H^{n-g}_{(x_1,\ldots,x_n)} (S/L^1(I))\]
is a $\mathbb{Z}$-graded $S/L^1(I)$-module in a natural manner. By \cite[Lemma 2.3]{Kustin-Ulrich}, the graded canonical module of the universal link is
 \[\omega_{S/L^1(I)} \cong IS/(\underline{\alpha})[-n+ \sum_{i=1}^g \deg(\alpha_i)]. \]

Since the indeterminates $Y_{ij}$ are units in $S$ and so have degree zero, it follows that the graded canonical module $\omega_{S/L^1(I)}$ is generated in degree 
 \[d-(-n+gd) = n-d(g-1).\]
Therefore the $a$-invariant
 \[a(S/L^1(I)) = d(g-1)-n \]
 is as claimed.
\end{proof}

\begin{exa}\label{Example:main}
Let $X$ be a $3 \times 7$ matrix of indeterminates and $R=K[X]$. The pair $(R,I_2(X))$ clearly does not have the property $\mathbf{P}$: The ideal $I_2(X)$ has height $12$ and does not contain any sequence of elements $\alpha_1, \ldots, \alpha_{12}$ as required for $\mathbf{P}$ to hold since that would require atleast $24$ indeterminates. 

By Lemma \ref{lemma:a-invariant}, the $a$-invariant of the (first) universal link is \[a(R(Y)/L^1(I_2(X))) = 2(12-1)-21=1>0.\] Since a graded $F$-injective ring has a non-positive $a$-invariant, it follows that the universal link $L^1(I_2(X))$ does not define an $F$-injective ring. As the universal link is a localization of the generic link at a prime ideal and since $F$-injectivity is a local property \cite[Proposition 3.3]{Datta-Murayama}, it also follows that the generic link $L_1(I_2(X))$ is \emph{not} $F$-injective. Therefore it is not $F$-pure. This observation stands in sharp contrast to the recent result \cite[Theorem 5.6]{Pandey-Tarasova} which says that the generic link of the maximal minors of a generic matrix is $F$-regular. 

Since a squarefree initial ideal implies $F$-injectivity (see \cite[Corollary 4.11(2)]{Varbaro-Koley}), it follows that the generic link $L_1(I_2(X))$ does \emph{not} have a squarefree initial ideal. However, the generic determinantal ideal of minors of any size has a squarefree initial ideal since the minors form a Gr\"obner basis with respect to a diagonal term order by \cite{Sturmfels}. We conclude that (generic) linkage does \emph{not} preserve the squarefreeness of the initial ideal.  

In addition, since generic determinantal rings (of minors of any size) are $F$-regular (\cite[\S 7]{Hochster-Huneke94b}) and hence have rational singularities (\cite[Theorem 4.3]{Smith}), we recover the fact that (generic) linkage does \textit{not} preserve rational singularities (see \cite[Corollary 3.4]{Niu}). Finally, note that by \cite[Theorem 1]{Kim-Miller-Niu} (see also \cite[Theorem 5.6]{Docampo}),
\[\fpt(L_1(I_2(X))) = 10.5<12=\height(I_2(X)).\] 
\end{exa}

We now show that the property $\mathbf{P}$ does not hold for the (maximal) minors of a Hankel matrix, which are linear specializations of the maximal minors of a generic matrix:

\begin{exa}\label{Example:main2}
 Let $H$ be a $t \times n$ Hankel matrix with $t \geq 2$ and $n \geq 2t+2$. Let $I_t(H)$ denote the ideal generated by the size $t$ minors of $H$ in the polynomial ring $R=K[H]$. By Lemma \ref{lemma:a-invariant}, the $a$-invariant of the universal link
 \[a(R(Y)/L^1(I_t(H))) = t(n-t)-(n+t-1) = (n-1)(t-1)-t^2 \geq t^2-t-1>0. \]
 By the same arguments as in Example \ref{Example:main}, we conclude that the generic link $L_1(I_t(H))$ does not define an $F$-injective ring and does not have a squarefree initial ideal. However, the Hankel determinantal ring $R/I_t(H)$ is $F$-rational if the field $K$ has characteristic $p$ with $p \geq t$, and therefore has rational singularities (see \cite[Theorem 2.1]{CMSV}). Furthermore, the ideal $I_t(H)$ has a squarefree initial ideal as its generators form a Gr\"obner basis with respect to the usual lexicographical ordering on the variables by \cite[Proposition 3.4]{Conca}.

 Note that the case $t=2$ gives us that the generic link of the rational normal curve is not $F$-injective and does not have a squarefree initial ideal for any $n \geq 6$.
\end{exa}

\section{Generic residual intersections of a complete intersection inherit $\mathbf{P}$}\label{Section: RI}

In this section, we show that the property $\mathbf{P}$ is inherited by the generic residual intersections of an ideal generated by a regular sequence. This observation is quite surprising in light of the discussion so far since residual intersections do not preserve height unless they are links. 
\addtocontents{toc}{\protect\setcounter{tocdepth}{1}}
\subsection{Brief recall of residual intersections} \label{subsec: RI} The notion of residual intersections essentially goes back to Artin and Nagata \cite{Artin-Nagata}. Let $X$ and $Y$ be two irreducible closed subschemes of a Noetherian scheme $Z$ with $\codim_Z X \leq \codim_Z Y =s$ and $Y \nsubseteq X$, then $Y$ is called a residual intersection of $X$ if the number of equations needed to define $X\cup Y$ as a subscheme of $Z$ is the smallest possible, namely $s$. However, in order to include the case where $X$ and $Y$ are reducible with $X$ possibly containing some component of $Y$, the following algebraic definition is more suited:

\begin{defn}
Let $R$ be a polynomial ring and $I$ an $R$-ideal. Given an ideal $\mathfrak{a} \subsetneq I$ generated by $s$ elements, set $J = \mathfrak{a}:I$. If $\height(J) \geq s \geq \height(I)$, then $J$ is an \emph{$s$-residual intersection} of $I$ (or $R/J$ is an $s$-residual intersection of $R/I$). 

If furthermore $I_{\frakp}=\fraka_{\frakp}$ for all $\frakp \in V(I)$ with $\height{\frakp}\leq s$, then $J$ is called a \textit{geometric} $s$-residual intersection of $I$. 
\end{defn}

Residual intersections are natural generalizations of linkage: If $I$ is unmixed of height $g$, then the $g$-residual intersections of $I$ are precisely its links and the geometric $g$-residual intersections are precisely its the geometric links by Proposition \ref{prop:PS}. Furthermore, as in the case of links, we may define the `most general' residual intersections if the following technical requirement is met:

\begin{defn}
 We say that an ideal $I$ in a Noetherian ring satisfies the condition $G_s$ if $\mu(I_{\frakp})\leq \height (\frakp)$ for all prime ideals $\frakp$ containing $I$ such that $\height (\frakp)\leq s-1$ (where $\mu(-)$ denotes the minimal number of generators). We say that $I$ satisfies $G_\infty$ if it satisfies $G_s$ for each $s$. It is clear that this is equivalent to the condition that $\mu(I_{\frakp})\leq \height(\frakp)$ for each prime ideal $\frakp$ containing $I$.  
\end{defn}

\begin{defn}
Let $R$ be a polynomial ring and $I$ be an ideal of height $g>0$ satisfying $G_{s+1}$, where $s\geq g$. Choose any generating set $\underline{f}\colonequals f_1, \ldots, f_n$ of $I$ and let $Y$ be an $s \times n$ matrix of indeterminates. Let $\mathfrak{a}$ be the ideal generated by the entries of the matrix $Y[f_1\dots f_n]^T$. Then we set \[ 
\RI(s;I)=\RI(s;\underline{f})\colonequals \mathfrak{a}R[Y]:IR[Y]\] and call this ideal a \emph{generic $s$-residual intersection} of $I$. 
\end{defn}

Generic residual intersections are geometric residual intersections \cite[Theorem 3.3]{Huneke-Ulrich88}. As is the case with generic linkage, the generic residual intersections are essentially independent of the generating set for the ideal \cite[Lemma 2.2]{HunekeUlrichGenericRI} and specialize to any particuar $s$-residual intersection of the given ideal. 

\subsection{$F$-pure threshold of the generic residual intersections of a complete intersection} We layout the setup for this subsection: Let $\underline{\alpha}\colonequals \alpha_1, \ldots, \alpha_g$ be a homogeneous regular sequence, with each $\alpha_i$ having the same degree, in the polynomial ring $K[\underline{x}]$ such that the pair $(K[\underline{x}],\underline{\alpha})$ has $\mathbf{P}$. Fix an integer $s\geq g$ and let $Y\colonequals (Y_{i,j})$ be a matrix of indeterminates of size $s\times g$. Let 
\[\RI(s;\underline{\alpha}) \colonequals Y[\alpha_1, \cdots, \alpha_g]^TR[Y]: (\underline{\alpha})R[Y] \] denote the generic $s$-residual intersection of the ideal $(\underline{\alpha})$; set $M\colonequals Y[\alpha_1, \cdots, \alpha_g]^T$. Since an ideal generated by a regular sequence satisfies $G_\infty$, the residual intersection $\RI(s;\underline{\alpha})$ exists for each $s\geq g$. 

We now show that the generic residual intersections of complete intersections inherit $\mathbf{P}$. We need the generators of the residual intersection to do this, which were calculated in \cite[Example 3.4]{Huneke-Ulrich88} (see also \cite{Bruns-Kustin-Miller}).

\begin{thm} \ \label{thm:RIhasP}
If the pair $(R,\underline{\alpha})$ has $\mathbf{P}$, then $(R[Y],\RI(s;\underline{\alpha}))$ also has $\mathbf{P}$ for each $s \geq |\underline{\alpha}|$.    
\end{thm}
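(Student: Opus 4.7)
The plan is to exhibit $s$ homogeneous elements of $\RI(s;\underline{\alpha})$ whose initial terms, under a carefully chosen weight order on $R[Y]$, are squarefree and pairwise coprime; since $\height\RI(s;\underline{\alpha})=s$, this is exactly $\mathbf{P}$. The $s$ elements I take are the first $g$ entries $\beta_1,\ldots,\beta_g$ of $M=Y[\alpha_1,\ldots,\alpha_g]^T$ together with the ``sliding window'' maximal minors
\[
\Delta_i \colonequals \det\bigl(Y_{\{i-g+1,\ldots,i\},\,\{1,\ldots,g\}}\bigr),\qquad i=g+1,\ldots,s.
\]
The $\beta_j$ lie in $(M)\subseteq \RI(s;\underline{\alpha})$ by definition, and applying Cramer's rule to the $g\times g$ linear system $Y_{\{i-g+1,\ldots,i\}}(\alpha_1,\ldots,\alpha_g)^T=(\beta_{i-g+1},\ldots,\beta_i)^T$ gives $\Delta_i\cdot\alpha_j\in (M)$ for every $j$, so $\Delta_i\in (M):(\underline{\alpha})=\RI(s;\underline{\alpha})$. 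This is precisely the content of the Huneke--Ulrich description in \cite[Example 3.4]{Huneke-Ulrich88} that I need.

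Next I construct the weight order. On the $Y$-variables I put $w(Y_{r,c}) \colonequals 2rc - c^2 = r^2 - (r-c)^2$; a direct expansion gives $w(r,c)+w(r',c')-w(r,c')-w(r',c) = 2(r'-r)(c'-c) > 0$ for $r<r'$, $c<c'$, so $w$ is strictly supermodular, and for each fixed row $r$ the function $c\mapsto w(r,c)$ is maximized uniquely at $c=r$. On the $x$-variables I put $w(x)=\epsilon\,\tilde w(x)$, where $\tilde w$ is a weight on $R$ whose induced order agrees with the term order $<$ from $\mathbf{P}$ on the (finitely many) monomials of $\alpha_1,\ldots,\alpha_g$, and $\epsilon>0$ is chosen small enough that the $\tilde w$-contribution cannot overturn any $Y$-weight comparison decided by a strict integer gap. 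For $\beta_j=\sum_l Y_{j,l}\alpha_l$, the $Y$-weight of a summand $Y_{j,l}\mu$ is $j^2-(j-l)^2$, uniquely maximized at $l=j$ with integer gap $\geq 1$, and within the $l=j$ summand the $\epsilon$-perturbation selects $\mu=\ini_<(\alpha_j)$, so $\ini(\beta_j)=Y_{j,j}\ini_<(\alpha_j)$. For $\Delta_i = \sum_\sigma \mathrm{sign}(\sigma)\prod_k Y_{i-g+k,\sigma(k)}$, the standard swap argument for the assignment problem together with strict supermodularity of $w$ forces the identity permutation to be the unique weight-maximizer, so $\ini(\Delta_i)=\prod_{k=1}^g Y_{i-g+k,\,k}$. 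Squarefreeness of every initial term is immediate, and pairwise coprimality reduces to three comparisons: two $\beta$'s use distinct diagonal entries $Y_{j,j}\neq Y_{j',j'}$ while $\ini_<(\alpha_j),\ini_<(\alpha_{j'})$ are coprime by $\mathbf{P}$ on $(R,\underline{\alpha})$; a $\beta_j$ and a $\Delta_i$ can share a variable only if $(j,j)=(i-g+k,k)$ for some $k$, which forces $i=g$ and is excluded; and two $\Delta_i,\Delta_{i'}$ can share $(i-g+k,k)=(i'-g+k',k')$ only when $k=k'$ and $i=i'$.

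The main obstacle in this plan is the term-order construction. A naive lex order with $Y_{1,1}>Y_{2,2}>\cdots>Y_{g,g}>\text{others}$ (the order that handles pure linkage in Lemma \ref{lemma:main}) fails here, because inside a sliding-window submatrix using some row $\leq g$ the transversal that grabs a high-priority entry $Y_{j,j}$ beats the submatrix's main diagonal, forcing $\ini(\Delta_i)$ to collide with $\ini(\beta_j)$. The strictly supermodular weight $2rc-c^2$ is designed to simultaneously (i) make $Y_{j,j}$ the unique heaviest variable in row $j$ of $Y$, which combined with the equal-degree hypothesis on the $\alpha_l$'s pins down $\ini(\beta_j)=Y_{j,j}\ini_<(\alpha_j)$, and (ii) force the weight-maximizing transversal of \emph{every} $g\times g$ submatrix of $Y$ to be its main diagonal, which for sliding windows with $i>g$ lies strictly off the diagonal of $Y$. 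The equal-degree hypothesis on $\underline{\alpha}$ built into the setup is what ensures the integer gap $(j-l)^2\geq 1$ between competing $Y$-weights actually survives the small $\epsilon$-perturbation coming from $\tilde w$.
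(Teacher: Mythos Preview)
Your proof is correct and follows the same overall strategy as the paper---select some entries of $M$ together with sliding-window maximal minors of $Y$, then design a term order making their initial terms squarefree and pairwise coprime---but the execution differs in two respects worth noting. First, you take the first $g$ entries of $M$ and the $s-g$ minors $\Delta_{g+1},\ldots,\Delta_s$, whereas the paper takes only the first $g-1$ entries of $M$ together with $s-g+1$ minors $[i,\ldots,g+i-1]$ for $1\le i\le s-g+1$; with that choice the paper's $\beta$-initial-terms pick up the \emph{super}diagonal entries $Y_{j,j+1}$, leaving the diagonal entries $Y_{j,j}$ available for the minor on rows $1,\ldots,g$. Second, and more interestingly, you replace the paper's custom lexicographic variable order (which has to list $Y_{j,j+1}$ first within row $j$ for $j\le g-2$ and then orders the remaining rows right-to-left, top row last) by a single strictly supermodular weight $w(Y_{r,c})=2rc-c^2$. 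This weight simultaneously makes $Y_{j,j}$ the unique heaviest entry in row $j\le g$ and, via the rearrangement/swap argument you give, forces the main diagonal to be the unique maximizing transversal of \emph{every} $g\times g$ submatrix of $Y$; the $\epsilon$-scaled $x$-weight then refines within the winning $Y$-summand using the equal-degree hypothesis from the setup. Your construction is more conceptual and explains transparently why the naive lex order of Lemma~\ref{lemma:main} cannot be reused once $s>g$. Two minor remarks: your Cramer-rule argument that $\Delta_i\in (M):(\underline{\alpha})$ is self-contained and avoids invoking the full structure theorem \cite[Example~3.4]{Huneke-Ulrich88} that the paper cites for $\RI(s;\underline{\alpha})=(M)+I_g(Y)$; and since $\mathbf{P}$ asks for a term order, you should state explicitly that your weight order is refined by an arbitrary tiebreaker---this is harmless, as the weight already uniquely selects the leading monomial of each $\beta_j$ and $\Delta_i$.
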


\begin{proof}
Since $\underline{\alpha}= \alpha_1,\ldots, \alpha_g$ is a regular sequence, by the structure theorem for the residual intersections of a complete intersection ring (\cite[Example 3.4]{Huneke-Ulrich88}), we have 
\[RI(s,\underline{\alpha}) = (\text{entries of the matrix $M$})R[Y]  +I_g(Y)R[Y]\]
is a homogeneous $R[Y]$-ideal of height $s$. We shall construct a term order $<_1$ in $R[Y]$ such that the pair $(R[Y],\underline{\beta})$ satisfies $\mathbf{P}$ with 
\[ \underline{\beta}\colonequals \{M_{1,1},\ldots, M_{g-1,1} \} \bigcup \{[i,i+1,\ldots,g-i+1]\; | \; 1 \leq i \leq s-g+1\}. \]
That is, $\underline{\beta}$ consists of the first $g-1$ entries of the matrix $M$ and all the $g$-minors of the matrix $Y\colonequals (Y_{i,j})$ with adjacent rows. Clearly, the set $\underline{\beta}$ has \[(g-1)+(s-g+1)=s=\height(\RI(s;\underline{\alpha}))\] 
elements, as required; hereafter, we call these elements $\beta_1, \ldots, \beta_{g-1},\beta_g,\ldots,\beta_s$ with the indexing according to the above display. We now construct the required term order.

Define the following variable order in $R[Y]$:
\begin{gather*}
Y_{s,g}>Y_{s,g-1}>\cdots > Y_{s,1}>\\
\vdots\\
Y_{g-1,g}>Y_{g-1,g-1}>\cdots > Y_{g-1,1}>\\
Y_{g-2,g-1}>Y_{g-2,g}>Y_{g-2,g-2}>\cdots > Y_{g-2,1}>\\
Y_{g-3,g-2}>Y_{g-3,g}>Y_{g-3,g-1}>\cdots > Y_{g-3,1}>\\
\vdots\\
Y_{1,2}>Y_{1,g}>Y_{1,g-1}>\cdots >Y_{1,1}>x_{i.j}.
\end{gather*}
That is, we begin the variable order with the last row of the matrix $Y$, move from right to left, and then vertically up along the rows of $Y$ for (the bottom) $s-g$ rows, thereafter we follow the same order for the remaining rows except that the $(i,i+1)$ entry appears first in the ordering of the $i$-th row. In addition, the ordering on the indeterminates $x_{i,j}$ is the one given in the property $\mathbf{P}$ for the pair $(R,\underline{\alpha})$. Given a monomial $u\in R[Y]$, we write $u_x\in R$ for the image of $u$ under the map of $R$-algebras from $R[Y]$ to $R$ sending each $Y_{i,j}$ to $1$. Consider the following term order $<_1$ on $R[Y]$: For monomials $u$ and $v$ in $R[Y]$, we define
\[u<_1 v \iff \begin{cases}
    u/u_x \mbox{ is lexicographically smaller than }v/v_x, \\
    u/u_x=v/v_x \mbox{ and }u_x<v_x.
\end{cases}\] 
Note that the initial terms of the first $g-1$ entries of the matrix $M$ are
\begin{align*}
 \ini_{<_1}(\beta_1) &= \ini_{<_1}(Y_{1,1}\alpha_1+Y_{1,2}\alpha_2+\cdots+Y_{1,g}\alpha_g) = Y_{1,2}\ini_<(\alpha_2),\\
  \ini_{<_1}(\beta_2) &= \ini_{<_1}(Y_{2,1}\alpha_1+Y_{2,2}\alpha_2+\cdots+Y_{2,g}\alpha_g) = Y_{2,3}\ini_<(\alpha_3),\\
 \vdots\\
  \ini_{<_1}(\beta_{g-1}) &= \ini_{<_1}(Y_{g-1,1}\alpha_1+Y_{g-1,2}\alpha_2+\cdots+Y_{g-1,g}\alpha_g)=Y_{g-1,g}\ini_<(\alpha_g).
\end{align*}
Further, by determinant expansion along the last row of $Y$, the initial terms of the remaining $s-g+1$ elements of $\underline{\beta}$,
\[\ini_{<_1}(\beta_i) = \ini_{<_1}([i,i+1,\cdots,g+i-1]) = Y_{i,1}Y_{i+1,2}\ldots Y_{g+i-1,g}, \quad 1 \leq i \leq s-g+1,\]
are just the products of the variables occurring in the main diagonal of the matrix $ [i,i+1,\cdots,g+i-1]$. Since $(R, \underline{\alpha})$ has $\mathbf{P}$ by hypothesis, we get that for the elements $\underline{\beta}= \beta_1,\ldots,\beta_s$ of $R[Y]$, each initial term is squarefree and each pair of initial terms is mutually coprime, as required for $(R[Y], \RI(s;\underline{\alpha}))$ to inherit $\mathbf{P}$.
\end{proof}

While the $F$-pure threshold of the generic link of an ideal is atleast as large as that of the ideal itself by \cite[Proposition 3.7]{Niu}, it is clearly bounded above by the height of the ideal due to Lemma \ref{lemma:fpt<height}. On the other hand:    
  
\begin{cor} \label{cor:FPTofRI}
If the pair $(R,\underline{\alpha})$ has $\mathbf{P}$, then, for each $s\geq |\underline{\alpha}|$, we have 
 \[\lct({\RI(s;\underline{\alpha}))} =\fpt({\RI(s;\underline{\alpha}))}=s \geq |\underline{\alpha}|=\fpt{(\underline{\alpha})}=\lct{(\underline{\alpha})}.\]
 \end{cor}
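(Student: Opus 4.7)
The plan is to deduce this corollary as a direct consequence of Theorem \ref{thm:RIhasP} combined with Theorem \ref{theorem:main} (cf.\ Remark \ref{rem:P is general}), so essentially no new computation is required; the main content has already been put in place.

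First, I would record that by Theorem \ref{thm:RIhasP}, the pair $(R[Y], \RI(s;\underline{\alpha}))$ has $\mathbf{P}$, with an explicit sequence $\underline{\beta}$ of length $s$ that forms a regular sequence inside $\RI(s;\underline{\alpha})$ whose initial terms are squarefree and pairwise coprime with respect to the term order $<_1$ constructed there. In particular, the height of $\RI(s;\underline{\alpha})$ is exactly $s$ (this follows from the structure result of \cite[Example 3.4]{Huneke-Ulrich88} recalled in the proof of Theorem \ref{thm:RIhasP}, and is consistent with the definition of an $s$-residual intersection).

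Next, I would invoke Remark \ref{rem:P is general}: the proof of Theorem \ref{theorem:main} uses only that the pair has $\mathbf{P}$ (not that the ideal is itself a generic link), so applied to the pair $(R[Y], \RI(s;\underline{\alpha}))$ it yields
\[
\fpt(\RI(s;\underline{\alpha})) \;=\; \height(\RI(s;\underline{\alpha})) \;=\; s
\]
in positive characteristic, and analogously $\lct(\RI(s;\underline{\alpha})) = s$ in characteristic zero. Applying the same theorem directly to the pair $(R,\underline{\alpha})$ (which has $\mathbf{P}$ by hypothesis), and noting $\height(\underline{\alpha})=|\underline{\alpha}|$ since $\underline{\alpha}$ is a regular sequence, gives $\fpt(\underline{\alpha}) = \lct(\underline{\alpha}) = |\underline{\alpha}|$. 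Finally, the inequality $s \geq |\underline{\alpha}|$ is exactly the standing hypothesis on $s$ in the definition of a generic $s$-residual intersection.

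I do not foresee a genuine obstacle here: all the heavy lifting was done in Theorem \ref{thm:RIhasP} (building the term order $<_1$ on $R[Y]$ and extracting the correct $s$-element subsequence $\underline{\beta}$ out of the known generators of $\RI(s;\underline{\alpha})$) and in Theorem \ref{theorem:main} (the pigeonhole argument giving $\fpt = \height$ for any pair having $\mathbf{P}$, together with the passage from $F$-purity to log canonicity via \cite[Theorem 3.9]{Hara-Watanabe}). The only small point worth stating cleanly is that the hypotheses of Theorem \ref{theorem:main} (unmixedness, homogeneity) are met: $\RI(s;\underline{\alpha})$ is homogeneous because the $\alpha_i$ are homogeneous of the same degree and the entries of $Y$ are indeterminates, and unmixedness follows from the Cohen--Macaulayness of generic residual intersections of complete intersections.
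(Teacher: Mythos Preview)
Your proposal is correct and follows essentially the same approach as the paper: invoke Theorem \ref{thm:RIhasP} to get that $(R[Y],\RI(s;\underline{\alpha}))$ has $\mathbf{P}$, then apply Theorem \ref{theorem:main} via Remark \ref{rem:P is general}. The paper's proof is terser (two sentences) and attributes the outer equality $\lct=\fpt$ to \cite{Hara-Yoshida}, whereas you route the characteristic-zero statement through Theorem \ref{theorem:main}(2) and \cite{Hara-Watanabe}; both are fine, and your extra remarks on homogeneity and unmixedness are helpful but not strictly needed.
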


 \begin{proof}
Follows from Theorem \ref{thm:RIhasP} using Theorem \ref{theorem:main}. The outer equality is \cite{Hara-Yoshida}.      
 \end{proof}

\begin{rmk}\label{remark:answerKMN}
Corollary \ref{cor:FPTofRI} with $s=|\underline{\alpha}|$ answers \cite[Question 4.5(b)]{Kim-Miller-Niu} by identifying a class of complete intersection ideals---those possessing the property $\mathbf{P}$---for which the log canonical threshold is preserved under generic linkage.   

We point out that the log canonical threshold may \textit{not} be preserved under generic linkage (and therefore under generic residual intersections) for complete intersection ideals $I$ in a polynomial ring $R$ if $(R,I)$ does not have $\mathbf{P}$: Let $R=K[x_1,x_2,x_3]$ and $I=(x_1^2x_2,x_3^3)$. By \cite[Example 2.4(a)]{Kim-Miller-Niu} and the linear programming methods in \cite{Shibuta-Takagi}, we get
\[ \lct(L_1(I))=11/6>5/6=\lct(I).\]
\end{rmk} 



\section*{Acknowledgments}
I thank Manav Batavia, Linquan Ma, Yevgeniya Tarasova, Bernd Ulrich, and Matteo Varbaro for several insightful discussions. I am grateful to Samiksha Bidhuri for her support and encouragement. The author was partially supported by the NSF--FRG Grant DMS-1952366 and the AMS--Simons Travel Grant ASTG-23-284908.

\bibliographystyle{alpha}
\bibliography{main}

@article {Kustin-Ulrich,
    AUTHOR = {Kustin, Andrew R. and Ulrich, Bernd},
     TITLE = {If the socle fits},
   JOURNAL = {J. Algebra},
  FJOURNAL = {Journal of Algebra},
    VOLUME = {147},
      YEAR = {1992},
    NUMBER = {1},
     PAGES = {63--80},
      ISSN = {0021-8693,1090-266X},
   MRCLASS = {13C40 (13H10)},
  MRNUMBER = {1154674},
MRREVIEWER = {Aron\ Simis},
       DOI = {10.1016/0021-8693(92)90252-H},
       URL = {https://doi.org/10.1016/0021-8693(92)90252-H},
}

@article {Miller-Singh-Varbaro,
    AUTHOR = {Miller, Lance E. and Singh, Anurag K. and Varbaro, Matteo},
     TITLE = {The {$F$}-pure threshold of a determinantal ideal},
   JOURNAL = {Bull. Braz. Math. Soc. (N.S.)},
  FJOURNAL = {Bulletin of the Brazilian Mathematical Society. New Series.
              Boletim da Sociedade Brasileira de Matem\'{a}tica},
    VOLUME = {45},
      YEAR = {2014},
    NUMBER = {4},
     PAGES = {767--775},
      ISSN = {1678-7544,1678-7714},
   MRCLASS = {13A35 (13A50 13C40)},
  MRNUMBER = {3296192},
MRREVIEWER = {Geoffrey\ D.\ Dietz},
       DOI = {10.1007/s00574-014-0074-6},
       URL = {https://doi.org/10.1007/s00574-014-0074-6},
}

@article {Kim-Miller-Niu,
    AUTHOR = {Kim, Youngsu and Miller, Lance E. and Niu, Wenbo},
     TITLE = {Log canonical thresholds of generic links of generic
              determinantal varieties},
   JOURNAL = {Proc. Amer. Math. Soc.},
  FJOURNAL = {Proceedings of the American Mathematical Society},
    VOLUME = {149},
      YEAR = {2021},
    NUMBER = {7},
     PAGES = {2777--2787},
      ISSN = {0002-9939,1088-6826},
   MRCLASS = {14J17 (13C40 14M06)},
  MRNUMBER = {4257793},
MRREVIEWER = {P.\ Schenzel},
       DOI = {10.1090/proc/15416},
       URL = {https://doi.org/10.1090/proc/15416},
}

@article {Niu,
    AUTHOR = {Niu, Wenbo},
     TITLE = {Singularities of generic linkage of algebraic varieties},
   JOURNAL = {Amer. J. Math.},
  FJOURNAL = {American Journal of Mathematics},
    VOLUME = {136},
      YEAR = {2014},
    NUMBER = {6},
     PAGES = {1665--1691},
      ISSN = {0002-9327,1080-6377},
   MRCLASS = {14F18 (14J17 14M06)},
  MRNUMBER = {3282984},
MRREVIEWER = {Alfio\ Ragusa},
       DOI = {10.1353/ajm.2014.0040},
       URL = {https://doi.org/10.1353/ajm.2014.0040},
}

@book {DetBook,
    AUTHOR = {Bruns, Winfried and Conca, Aldo and Raicu, Claudiu and
              Varbaro, Matteo},
     TITLE = {Determinants, {G}r\"{o}bner bases and cohomology},
    SERIES = {Springer Monographs in Mathematics},
 PUBLISHER = {Springer, Cham},
      YEAR = {[2022] \copyright 2022},
     PAGES = {xiii+507},
      ISBN = {978-3-031-05479-2; 978-3-031-05480-8},
   MRCLASS = {13P10 (13C40)},
  MRNUMBER = {4627943},
       DOI = {10.1007/978-3-031-05480-8},
       URL = {https://doi.org/10.1007/978-3-031-05480-8},
}

@article {PS74,
    AUTHOR = {Peskine, Christian and Szpiro, Lucien},
     TITLE = {Dimension projective finie et cohomologie locale.
              {A}pplications \`a la d\'{e}monstration de conjectures de {M}.
              {A}uslander, {H}. {B}ass et {A}. {G}rothendieck},
   JOURNAL = {Inst. Hautes \'{E}tudes Sci. Publ. Math.},
  FJOURNAL = {Institut des Hautes \'{E}tudes Scientifiques. Publications
              Math\'{e}matiques},
    NUMBER = {42},
      YEAR = {1973},
     PAGES = {47--119},
      ISSN = {0073-8301},
   MRCLASS = {14B15 (13D05 13H10)},
  MRNUMBER = {374130},
MRREVIEWER = {Melvin Hochster},
       URL = {http://www.numdam.org/item?id=PMIHES_1973__42__47_0},
}

@article {Varbaro-Koley,
    AUTHOR = {Koley, Mitra and Varbaro, Matteo},
     TITLE = {Gr\"{o}bner deformations and {$F$}-singularities},
   JOURNAL = {Math. Nachr.},
  FJOURNAL = {Mathematische Nachrichten},
    VOLUME = {296},
      YEAR = {2023},
    NUMBER = {7},
     PAGES = {2903--2917},
      ISSN = {0025-584X,1522-2616},
   MRCLASS = {13A35 (13P10)},
  MRNUMBER = {4626865},
}

@article {Huneke-Ulrich87,
    AUTHOR = {Huneke, Craig and Ulrich, Bernd},
     TITLE = {The structure of linkage},
   JOURNAL = {Ann. of Math. (2)},
  FJOURNAL = {Annals of Mathematics. Second Series},
    VOLUME = {126},
      YEAR = {1987},
    NUMBER = {2},
     PAGES = {277--334},
      ISSN = {0003-486X,1939-8980},
   MRCLASS = {13H10 (13D10 13H15 14B07)},
  MRNUMBER = {908149},
MRREVIEWER = {Matthew\ Miller},
       DOI = {10.2307/1971402},
       URL = {https://doi.org/10.2307/1971402},
}

@article {Docampo,
    AUTHOR = {Docampo, Roi},
     TITLE = {Arcs on determinantal varieties},
   JOURNAL = {Trans. Amer. Math. Soc.},
  FJOURNAL = {Transactions of the American Mathematical Society},
    VOLUME = {365},
      YEAR = {2013},
    NUMBER = {5},
     PAGES = {2241--2269},
      ISSN = {0002-9947,1088-6850},
   MRCLASS = {14E18 (14M12)},
  MRNUMBER = {3020097},
MRREVIEWER = {Julien\ Sebag},
       DOI = {10.1090/S0002-9947-2012-05564-4},
       URL = {https://doi.org/10.1090/S0002-9947-2012-05564-4},
}

@article {Pandey-Tarasova,
    AUTHOR = {Pandey, Vaibhav and Tarasova, Yevgeniya},
     TITLE = {Linkage and {$F$}-regularity of determinantal rings},
   JOURNAL = {Int. Math. Res. Not. IMRN},
  FJOURNAL = {International Mathematics Research Notices. IMRN},
      YEAR = {2024},
    NUMBER = {11},
     PAGES = {9323--9339},
      ISSN = {1073-7928,1687-0247},
   MRCLASS = {13A35 (13C40)},
  MRNUMBER = {4756115},
MRREVIEWER = {Mitra\ Koley},
       DOI = {10.1093/imrn/rnae040},
       URL = {https://doi.org/10.1093/imrn/rnae040},
}

@article {Hochster-Huneke94b,
    AUTHOR = {Hochster, Melvin and Huneke, Craig},
     TITLE = {Tight closure of parameter ideals and splitting in
              module-finite extensions},
   JOURNAL = {J. Algebraic Geom.},
  FJOURNAL = {Journal of Algebraic Geometry},
    VOLUME = {3},
      YEAR = {1994},
    NUMBER = {4},
     PAGES = {599--670},
      ISSN = {1056-3911,1534-7486},
   MRCLASS = {13A35 (13B21 13H10)},
  MRNUMBER = {1297848},
MRREVIEWER = {Ian\ M.\ Aberbach},
}

@article {Smith,
    AUTHOR = {Smith, Karen E.},
     TITLE = {{$F$}-rational rings have rational singularities},
   JOURNAL = {Amer. J. Math.},
  FJOURNAL = {American Journal of Mathematics},
    VOLUME = {119},
      YEAR = {1997},
    NUMBER = {1},
     PAGES = {159--180},
      ISSN = {0002-9327,1080-6377},
   MRCLASS = {13A35 (13D45 13F40 14B05)},
  MRNUMBER = {1428062},
MRREVIEWER = {Ian\ M.\ Aberbach},
       URL =
        {http://muse.jhu.edu/journals/american_journal_of_mathematics/v119/119.1smith.pdf},
}

@article {Huneke-Ulrich85,
    AUTHOR = {Huneke, Craig and Ulrich, Bernd},
     TITLE = {Divisor class groups and deformations},
   JOURNAL = {Amer. J. Math.},
  FJOURNAL = {American Journal of Mathematics},
    VOLUME = {107},
      YEAR = {1985},
    NUMBER = {6},
     PAGES = {1265--1303},
      ISSN = {0002-9327,1080-6377},
   MRCLASS = {13D10 (13H10)},
  MRNUMBER = {815763},
MRREVIEWER = {Matthew\ Miller},
       DOI = {10.2307/2374407},
       URL = {https://doi.org/10.2307/2374407},
}

@article {Takagi-Watanabe,
    AUTHOR = {Takagi, Shunsuke and Watanabe, Kei-ichi},
     TITLE = {On {F}-pure thresholds},
   JOURNAL = {J. Algebra},
  FJOURNAL = {Journal of Algebra},
    VOLUME = {282},
      YEAR = {2004},
    NUMBER = {1},
     PAGES = {278--297},
      ISSN = {0021-8693,1090-266X},
   MRCLASS = {13A35},
  MRNUMBER = {2097584},
MRREVIEWER = {Oleg\ N.\ Popov},
       DOI = {10.1016/j.jalgebra.2004.07.011},
       URL = {https://doi.org/10.1016/j.jalgebra.2004.07.011},
}

@incollection {Mircea-Takagi-Watanabe,
    AUTHOR = {Musta\c{t}\v{a}, Mircea and Takagi, Shunsuke and Watanabe,
              Kei-ichi},
     TITLE = {F-thresholds and {B}ernstein-{S}ato polynomials},
 BOOKTITLE = {European {C}ongress of {M}athematics},
     PAGES = {341--364},
 PUBLISHER = {Eur. Math. Soc., Z\"{u}rich},
      YEAR = {2005},
      ISBN = {3-03719-009-4},
   MRCLASS = {13A35 (14B05)},
  MRNUMBER = {2185754},
MRREVIEWER = {Karen\ E.\ Smith},
}

@article {Hara-Yoshida,
    AUTHOR = {Hara, Nobuo and Yoshida, Ken-Ichi},
     TITLE = {A generalization of tight closure and multiplier ideals},
   JOURNAL = {Trans. Amer. Math. Soc.},
  FJOURNAL = {Transactions of the American Mathematical Society},
    VOLUME = {355},
      YEAR = {2003},
    NUMBER = {8},
     PAGES = {3143--3174},
      ISSN = {0002-9947,1088-6850},
   MRCLASS = {13A35},
  MRNUMBER = {1974679},
       DOI = {10.1090/S0002-9947-03-03285-9},
       URL = {https://doi.org/10.1090/S0002-9947-03-03285-9},
}

@article {Destefani-Luis,
    AUTHOR = {De Stefani, Alessandro and N\'{u}\~{n}ez-Betancourt, Luis and
              P\'{e}rez, Felipe},
     TITLE = {On the existence of {$F$}-thresholds and related limits},
   JOURNAL = {Trans. Amer. Math. Soc.},
  FJOURNAL = {Transactions of the American Mathematical Society},
    VOLUME = {370},
      YEAR = {2018},
    NUMBER = {9},
     PAGES = {6629--6650},
      ISSN = {0002-9947,1088-6850},
   MRCLASS = {13A35 (13D45 14B05)},
  MRNUMBER = {3814343},
MRREVIEWER = {Thomas\ Polstra},
       DOI = {10.1090/tran/7176},
       URL = {https://doi.org/10.1090/tran/7176},
}

@article {Destefani-LuisNagoya,
    AUTHOR = {De Stefani, Alessandro and N\'{u}\~{n}ez-Betancourt, Luis},
     TITLE = {{$F$}-thresholds of graded rings},
   JOURNAL = {Nagoya Math. J.},
  FJOURNAL = {Nagoya Mathematical Journal},
    VOLUME = {229},
      YEAR = {2018},
     PAGES = {141--168},
      ISSN = {0027-7630,2152-6842},
   MRCLASS = {13A35 (13H10 14B05)},
  MRNUMBER = {3778235},
MRREVIEWER = {Thomas\ Polstra},
       DOI = {10.1017/nmj.2016.65},
       URL = {https://doi.org/10.1017/nmj.2016.65},
}

@book {Johnson,
    AUTHOR = {Johnson, Amanda Ann},
     TITLE = {Multiplier ideals of determinantal ideals},
      NOTE = {Thesis (Ph.D.)--University of Michigan},
 PUBLISHER = {ProQuest LLC, Ann Arbor, MI},
      YEAR = {2003},
     PAGES = {83},
      ISBN = {978-0496-43742-9},
   MRCLASS = {99-05},
  MRNUMBER = {2704808},
       URL =
              {http://gateway.proquest.com/openurl?url_ver=Z39.88-2004&rft_val_fmt=info:ofi/fmt:kev:mtx:dissertation&res_dat=xri:pqdiss&rft_dat=xri:pqdiss:3096117},
}

@article {Hara-Watanabe,
    AUTHOR = {Hara, Nobuo and Watanabe, Kei-Ichi},
     TITLE = {F-regular and {F}-pure rings vs. log terminal and log
              canonical singularities},
   JOURNAL = {J. Algebraic Geom.},
  FJOURNAL = {Journal of Algebraic Geometry},
    VOLUME = {11},
      YEAR = {2002},
    NUMBER = {2},
     PAGES = {363--392},
      ISSN = {1056-3911,1534-7486},
   MRCLASS = {13A35 (14B05 14J17)},
  MRNUMBER = {1874118},
MRREVIEWER = {Karen\ E.\ Smith},
       DOI = {10.1090/S1056-3911-01-00306-X},
       URL = {https://doi.org/10.1090/S1056-3911-01-00306-X},
}

@article {Kustin-Ulrich-Memoirs,
    AUTHOR = {Kustin, Andrew R. and Ulrich, Bernd},
     TITLE = {A family of complexes associated to an almost alternating map,
              with applications to residual intersections},
   JOURNAL = {Mem. Amer. Math. Soc.},
  FJOURNAL = {Memoirs of the American Mathematical Society},
    VOLUME = {95},
      YEAR = {1992},
    NUMBER = {461},
     PAGES = {iv+94},
      ISSN = {0065-9266,1947-6221},
   MRCLASS = {13D25 (13C14 13C40)},
  MRNUMBER = {1091668},
MRREVIEWER = {W.\ V.\ Vasconcelos},
       DOI = {10.1090/memo/0461},
       URL = {https://doi.org/10.1090/memo/0461},
}

@article {Huneke-Ulrich88,
    AUTHOR = {Huneke, Craig and Ulrich, Bernd},
     TITLE = {Residual intersections},
   JOURNAL = {J. Reine Angew. Math.},
  FJOURNAL = {Journal f\"{u}r die Reine und Angewandte Mathematik. [Crelle's
              Journal]},
    VOLUME = {390},
      YEAR = {1988},
     PAGES = {1--20},
      ISSN = {0075-4102,1435-5345},
   MRCLASS = {13H10 (13C13 13D10 14B07)},
  MRNUMBER = {953673},
MRREVIEWER = {J\"{u}rgen\ Herzog},
       DOI = {10.1515/crll.1988.390.1},
       URL = {https://doi.org/10.1515/crll.1988.390.1},
}

@incollection {HunekeUlrichGenericRI,
    AUTHOR = {Huneke, Craig and Ulrich, Bernd},
     TITLE = {Generic residual intersections},
 BOOKTITLE = {Commutative algebra ({S}alvador, 1988)},
    SERIES = {Lecture Notes in Math.},
    VOLUME = {1430},
     PAGES = {47--60},
 PUBLISHER = {Springer, Berlin},
      YEAR = {1990},
      ISBN = {3-540-52745-1},
   MRCLASS = {13C40 (13H10 14M12)},
  MRNUMBER = {1068323},
MRREVIEWER = {Matthew\ Miller},
       DOI = {10.1007/BFb0085536},
       URL = {https://doi.org/10.1007/BFb0085536},
}

@article {Artin-Nagata,
    AUTHOR = {Artin, Michael and Nagata, Masayoshi},
     TITLE = {Residual intersections in {C}ohen-{M}acaulay rings},
   JOURNAL = {J. Math. Kyoto Univ.},
  FJOURNAL = {Journal of Mathematics of Kyoto University},
    VOLUME = {12},
      YEAR = {1972},
     PAGES = {307--323},
      ISSN = {0023-608X},
   MRCLASS = {13H10},
  MRNUMBER = {301006},
MRREVIEWER = {Tadayuki\ Matsuoka},
       DOI = {10.1215/kjm/1250523522},
       URL = {https://doi.org/10.1215/kjm/1250523522},
}

@article {Hochster73,
    AUTHOR = {Hochster, Melvin},
     TITLE = {Properties of {N}oetherian rings stable under general grade
              reduction},
   JOURNAL = {Arch. Math. (Basel)},
  FJOURNAL = {Archiv der Mathematik},
    VOLUME = {24},
      YEAR = {1973},
     PAGES = {393--396},
      ISSN = {0003-889X,1420-8938},
   MRCLASS = {13E05},
  MRNUMBER = {330147},
MRREVIEWER = {D.\ Kirby},
       DOI = {10.1007/BF01228228},
       URL = {https://doi.org/10.1007/BF01228228},
}

@article {Barile,
    AUTHOR = {Barile, Margherita},
     TITLE = {Arithmetical ranks of ideals associated to symmetric and
              alternating matrices},
   JOURNAL = {J. Algebra},
  FJOURNAL = {Journal of Algebra},
    VOLUME = {176},
      YEAR = {1995},
    NUMBER = {1},
     PAGES = {59--82},
      ISSN = {0021-8693,1090-266X},
   MRCLASS = {13C40 (13D05 13F50)},
  MRNUMBER = {1345294},
MRREVIEWER = {Jos\'{e}\ F.\ Andrade},
       DOI = {10.1006/jabr.1995.1233},
       URL = {https://doi.org/10.1006/jabr.1995.1233},
}

@article {Buchsbaum-Eisenbud,
    AUTHOR = {Buchsbaum, David A. and Eisenbud, David},
     TITLE = {Algebra structures for finite free resolutions, and some
              structure theorems for ideals of codimension {$3$}},
   JOURNAL = {Amer. J. Math.},
  FJOURNAL = {American Journal of Mathematics},
    VOLUME = {99},
      YEAR = {1977},
    NUMBER = {3},
     PAGES = {447--485},
      ISSN = {0002-9327,1080-6377},
   MRCLASS = {13D15},
  MRNUMBER = {453723},
MRREVIEWER = {M.\ Nagata},
       DOI = {10.2307/2373926},
       URL = {https://doi.org/10.2307/2373926},
}

@article {Bruns-Kustin-Miller,
    AUTHOR = {Bruns, Winfried and Kustin, Andrew R. and Miller, Matthew},
     TITLE = {The resolution of the generic residual intersection of a
              complete intersection},
   JOURNAL = {J. Algebra},
  FJOURNAL = {Journal of Algebra},
    VOLUME = {128},
      YEAR = {1990},
    NUMBER = {1},
     PAGES = {214--239},
      ISSN = {0021-8693,1090-266X},
   MRCLASS = {13C40 (13H10)},
  MRNUMBER = {1031918},
MRREVIEWER = {Bernd\ Ulrich},
       DOI = {10.1016/0021-8693(90)90050-X},
       URL = {https://doi.org/10.1016/0021-8693(90)90050-X},
}

@article {Hochster-Huneke94a,
    AUTHOR = {Hochster, Melvin and Huneke, Craig},
     TITLE = {{$F$}-regularity, test elements, and smooth base change},
   JOURNAL = {Trans. Amer. Math. Soc.},
  FJOURNAL = {Transactions of the American Mathematical Society},
    VOLUME = {346},
      YEAR = {1994},
    NUMBER = {1},
     PAGES = {1--62},
      ISSN = {0002-9947,1088-6850},
   MRCLASS = {13A35 (13B99 13F40)},
  MRNUMBER = {1273534},
MRREVIEWER = {Ian\ M.\ Aberbach},
       DOI = {10.2307/2154942},
       URL = {https://doi.org/10.2307/2154942},
}

@article {Sturmfels,
    AUTHOR = {Sturmfels, Bernd},
     TITLE = {Gr\"{o}bner bases and {S}tanley decompositions of
              determinantal rings},
   JOURNAL = {Math. Z.},
  FJOURNAL = {Mathematische Zeitschrift},
    VOLUME = {205},
      YEAR = {1990},
    NUMBER = {1},
     PAGES = {137--144},
      ISSN = {0025-5874,1432-1823},
   MRCLASS = {14M12 (13P10 68Q40)},
  MRNUMBER = {1069489},
MRREVIEWER = {Piotr\ Pragacz},
       DOI = {10.1007/BF02571229},
       URL = {https://doi.org/10.1007/BF02571229},
}

@article {Shibuta-Takagi,
    AUTHOR = {Shibuta, Takafumi and Takagi, Shunsuke},
     TITLE = {Log canonical thresholds of binomial ideals},
   JOURNAL = {Manuscripta Math.},
  FJOURNAL = {Manuscripta Mathematica},
    VOLUME = {130},
      YEAR = {2009},
    NUMBER = {1},
     PAGES = {45--61},
      ISSN = {0025-2611,1432-1785},
   MRCLASS = {14F18 (13A35 14B05)},
  MRNUMBER = {2533766},
MRREVIEWER = {Karl\ Schwede},
       DOI = {10.1007/s00229-009-0270-7},
       URL = {https://doi.org/10.1007/s00229-009-0270-7},
}

@article {Lyubeznik-Smith,
    AUTHOR = {Lyubeznik, Gennady and Smith, Karen E.},
     TITLE = {Strong and weak {$F$}-regularity are equivalent for graded
              rings},
   JOURNAL = {Amer. J. Math.},
  FJOURNAL = {American Journal of Mathematics},
    VOLUME = {121},
      YEAR = {1999},
    NUMBER = {6},
     PAGES = {1279--1290},
      ISSN = {0002-9327,1080-6377},
   MRCLASS = {13A35 (13A02)},
  MRNUMBER = {1719806},
MRREVIEWER = {J.\ K.\ Verma},
       URL =
              {http://muse.jhu.edu/journals/american_journal_of_mathematics/v121/121.6lyubeznik.pdf},
}

@article {Takagi,
    AUTHOR = {Takagi, Shunsuke},
     TITLE = {F-singularities of pairs and inversion of adjunction of
              arbitrary codimension},
   JOURNAL = {Invent. Math.},
  FJOURNAL = {Inventiones Mathematicae},
    VOLUME = {157},
      YEAR = {2004},
    NUMBER = {1},
     PAGES = {123--146},
      ISSN = {0020-9910,1432-1297},
   MRCLASS = {14E30 (13A35 14N30)},
  MRNUMBER = {2135186},
MRREVIEWER = {Karen\ E.\ Smith},
       DOI = {10.1007/s00222-003-0350-3},
       URL = {https://doi.org/10.1007/s00222-003-0350-3},
}

@article {Datta-Murayama,
    AUTHOR = {Datta, Rankeya and Murayama, Takumi},
     TITLE = {Permanence properties of {$F$}-injectivity},
   JOURNAL = {Math. Res. Lett.},
  FJOURNAL = {Mathematical Research Letters},
    VOLUME = {31},
      YEAR = {2024},
    NUMBER = {4},
     PAGES = {985--1027},
      ISSN = {1073-2780,1945-001X},
   MRCLASS = {13A35},
  MRNUMBER = {4831046},
MRREVIEWER = {Geoffrey\ D.\ Dietz},
       DOI = {10.4310/mrl.241118233550},
       URL = {https://doi.org/10.4310/mrl.241118233550},
}

@book {Fulton,
    AUTHOR = {Fulton, William},
     TITLE = {Intersection theory},
    SERIES = {Ergebnisse der Mathematik und ihrer Grenzgebiete (3) [Results
              in Mathematics and Related Areas (3)]},
    VOLUME = {2},
 PUBLISHER = {Springer-Verlag, Berlin},
      YEAR = {1984},
     PAGES = {xi+470},
      ISBN = {3-540-12176-5},
   MRCLASS = {14C17 (14-02 14C40)},
  MRNUMBER = {732620},
MRREVIEWER = {Werner\ Kleinert},
       DOI = {10.1007/978-3-662-02421-8},
       URL = {https://doi.org/10.1007/978-3-662-02421-8},
}

@incollection {Ulrich92,
    AUTHOR = {Ulrich, Bernd},
     TITLE = {Remarks on residual intersections},
 BOOKTITLE = {Free resolutions in commutative algebra and algebraic geometry
              ({S}undance, {UT}, 1990)},
    SERIES = {Res. Notes Math.},
    VOLUME = {2},
     PAGES = {133--138},
 PUBLISHER = {Jones and Bartlett, Boston, MA},
      YEAR = {1992},
      ISBN = {0-86720-285-8},
   MRCLASS = {13C40 (13H10)},
  MRNUMBER = {1165323},
}

@article {Conca,
    AUTHOR = {Conca, Aldo},
     TITLE = {Straightening law and powers of determinantal ideals of
              {H}ankel matrices},
   JOURNAL = {Adv. Math.},
  FJOURNAL = {Advances in Mathematics},
    VOLUME = {138},
      YEAR = {1998},
    NUMBER = {2},
     PAGES = {263--292},
      ISSN = {0001-8708,1090-2082},
   MRCLASS = {13C40 (13A30 13F50 13H10)},
  MRNUMBER = {1645574},
MRREVIEWER = {Rafael\ S\'anchez},
       DOI = {10.1006/aima.1998.1740},
       URL = {https://doi.org/10.1006/aima.1998.1740},
}

@article {CMSV,
    AUTHOR = {Conca, Aldo and Mostafazadehfard, Maral and Singh, Anurag K.
              and Varbaro, Matteo},
     TITLE = {Hankel determinantal rings have rational singularities},
   JOURNAL = {Adv. Math.},
  FJOURNAL = {Advances in Mathematics},
    VOLUME = {335},
      YEAR = {2018},
     PAGES = {111--129},
      ISSN = {0001-8708,1090-2082},
   MRCLASS = {13A35 (13C20 13C40)},
  MRNUMBER = {3836659},
MRREVIEWER = {Geoffrey\ D.\ Dietz},
       DOI = {10.1016/j.aim.2018.06.011},
       URL = {https://doi.org/10.1016/j.aim.2018.06.011},
}

@article {HunekeUlrich-Duke,
    AUTHOR = {Huneke, Craig and Ulrich, Bernd},
     TITLE = {Algebraic linkage},
   JOURNAL = {Duke Math. J.},
  FJOURNAL = {Duke Mathematical Journal},
    VOLUME = {56},
      YEAR = {1988},
    NUMBER = {3},
     PAGES = {415--429},
      ISSN = {0012-7094,1547-7398},
   MRCLASS = {13H10 (13D10 14M05)},
  MRNUMBER = {948528},
MRREVIEWER = {Matthew\ Miller},
       DOI = {10.1215/S0012-7094-88-05618-9},
       URL = {https://doi.org/10.1215/S0012-7094-88-05618-9},
}

\end{document}